\newtheorem{lemma}{Lemma}
\newtheorem{cor}{Corollary}
\newtheorem{theorem}{Theorem}
\newtheorem{prop}{Proposition}
\newtheorem{assumption}{Assumption}
\newcommand{\re}{\mbox{Re}\,\xspace}
\newcommand{\im}{\mbox{Im}\,\xspace}
\newcommand{\norm}[1]{\left \lVert #1 \right \rVert\xspace}
\begin{document}
	
	\preprint{AIP/123-QED}\
	\title[Geometric invariance of determining and resonating centers:  Odd- and any-number limitations of Pyragas control]
	{Geometric invariance of determining and resonating centers: \\ Odd- and any-number limitations of Pyragas control}

	\author{B. de Wolff}
	\email{bajdewolff@zedat.fu-berlin.de}
	\author{I. Schneider}%
	\email{isabelle.schneider@fu-berlin.de}
	\affiliation{ 
		Freie Universität Berlin, Institut für Mathematik, Arnimallee 7, 14195 Berlin, Germany
	}%
	
	
	\date{\today}
	
	\begin{abstract}
		In the spirit of the well-known odd-number limitation, we study failure of Pyragas control of periodic orbits and equilibria. Addressing the periodic orbits first, we derive a fundamental observation on the invariance of the geometric multiplicity of the trivial Floquet multiplier. This observation leads to a clear and unifying understanding of the odd-number limitation, both in the autonomous and the non-autonomous setting. Since the presence of the trivial Floquet multiplier governs the possibility of successful stabilization, we refer to this multiplier as the determining center. The geometric invariance of the determining center also leads to a necessary condition on the gain matrix for the control to be successful. In particular, we exclude scalar gains.
		Application of Pyragas control on  equilibria does not only imply a geometric invariance of the determining center, but surprisingly also on centers which resonate with the time delay. Consequently, we formulate odd- and any-number limitations  both for real eigenvalues together with arbitrary time delay as well as for complex conjugated eigenvalue pairs together with a resonating time delay.
		The very general nature of our results allows for various applications.
	\end{abstract}

	\maketitle

	\begin{quotation}
		Time-delayed feedback control using the Pyragas method is an important tool for the stabilization of equilibria and periodic orbits. 
		However, the time delay generates an infinite dimensional system and general results on the controllability are hard to obtain.
		With the present paper, we clarify the confusion in the literature on the odd-number limitation.
		In order to obtain our results, we shift the emphasis from analytic to geometric aspects, and concentrate on properties of the unstable object itself rather than on its dynamical system.
		In this way, we obtain fundamental geometric invariance principles from which all further theorems are deduced.
	\end{quotation}

	\section{\label{sec:introduction}Introduction}
	
	In a dynamical system given by the ordinary differential equation $\dot{x}(t)=f(x(t))$, $x \in \mathbb{R}^{N}$, unstable periodic orbits can be stabilized using additive control terms of the form
	\begin{equation}\label{pyragas}
		K\big(x(t)-x(t-T)\big).
	\end{equation}
	Here $T>0$ is the time delay, and $K \in \mathbb{R}^{N\times N}$ is the weight of the control term, which we call the \emph{gain matrix}. 
	Such control terms were first introduced by Kestutis Pyragas in his work from 1992 \cite{PYR92}.
	The control term by Pyragas uses the difference between the delayed state $x(t-T)$ and the current state $x(t)$ of the system.
	Frequently, the time delay $T$ is chosen to be an integer multiple of the period of the periodic orbit $x_\ast(t)$ of the uncontrolled system.
	In this case, the control vanishes on the orbit itself, and $x_\ast(t)$ is also a solution of the controlled system.
	We call such a control term \emph{noninvasive} because it does not change the periodic orbit itself, but only affects its stability properties.
	In the case of equilibria, the time delay $T$ can be chosen arbitrarily to achieve noninvasiveness.
	
	The main advantage of the Pyragas control scheme, also for experimental realizations, is its model-independence; no expensive calculations are needed for its implementation, and the only information needed is the period of the targeted periodic orbit.
	As a consequence, Pyragas control has many different successful applications, e.g., in atomic force microscopes \cite{YAM09}, un-manned helicopters \cite{OMA12}, complex robots \cite{STE10}, semiconductor lasers \cite{SCH06,SCH11a}, and the enzymatic peroxidase-oxidase reaction \cite{LEK95}, among others. 
	The success of Pyragas control has been verified for a large number of specific theoretical models as well, including spiral break-up in cardiac tissues \cite{RAP99}, flow alignment in sheared liquid
	crystals \cite{STR13},  near Hopf bifurcation \cite{FIE07} and unstable foci \cite{HOE05,YAN06},  synchrony in networks of coupled Stuart-Landau oscillators \cite{SCH13,SCH16}, delay equations \cite{FIE15,FIE17}, in quantum systems \cite{HEI15,DRO19}, the Duffing oscillator \cite{FIE20} and Turing patterns \cite{KUS18}.
	
	General conditions on the success or failure of Pyragas control are hard to obtain because the time delay adds infinitely many  dimensions to the complexity of the dynamical system.
	In fact, there has been serious confusion in the literature on the  so-called \emph{``odd-number limitation''}, which was correctly proven for non-autonomous systems in 1997 \cite{NAK97}; see also Corollary \ref{thm: nakajima} in Section \ref{sec: odd number}. The odd-number limitation states that in non-autonomous periodic ordinary differential equations, hyperbolic periodic orbits with an odd number of real Floquet multipliers larger than one cannot be stabilized using Pyragas control.
	In a footnote, Nakajima formulated the conjecture that the odd-number limitation also holds in the autonomous case and this was subsequently often wrongly cited as a proven fact.
	However, in 2007 Fiedler et al.~found a counter-example: It is possible to stabilize a periodic orbit near a subcritical Hopf bifurcation with one real positive Floquet multiplier \cite{FIE07}. 
	A correct version of the odd-number limitation for autonomous equations was subsequentially presented by Hooton and Amann in 2012 \cite{HOO12}.
	
	In the present paper we  clarify the confusion on the limitations of Pyragas control.
	In contrast to previous works, we focus on the \emph{geometric}, rather than the algebraic, multiplicity of Floquet multipliers (for periodic orbits, see Section II A for a precise definition) and eigenvalues (for equilibria).  
	That is, our interest lies in the dimension of the eigenspace and not on the number of solutions of the characteristic equation.
	Our main results Theorems \ref{lem: preservation} and \ref{lem: preservation eq}  show that the geometric multiplicity of the Floquet multiplier 1, or eigenvalue zero, is invariant under control. Whether such a Floquet multiplier 1, or eigenvalue 0, is present in the uncontrolled system or not decides whether the periodic orbit can in principle be stabilized via Pyragas control. Therefore we refer to geometric eigenspace of the Floquet multiplier 1, or the eigenvalue 0, as the \emph{determining center}.
	
	From the main results Theorem \ref{lem: preservation} and Theorem \ref{lem: preservation eq} we obtain several corollaries, among others the odd-number limitation and an any-number limitation for commuting control matrices.
	Moreover, and rather surprisingly, for steady states, the geometric multiplicities of the \emph{resonating centers} $2 \pi n i/T$, with $n \in \mathbb{Z}$ and $T$ the time delay, are also preserved under control. As a corollary, we obtain that not only real eigenvalues are impossible to stabilize using commuting gain matrices, but also those of the form $\lambda\pm 2 \pi i n /T$. 
	All results are of a qualitative nature, and apply to any ordinary differential equation (ODE) subject to Pyragas control.  They do not give any quantitative restrictions on the Floquet multipliers \cite{FIE08,JUS99} or on the time delay \cite{YAN06}. 
	
	This paper is organized as follows:  
	In Section II, we investigate Pyragas control of periodic orbits. 
	We formulate the fundamental principle of the invariance of the geometric multiplicity of the trivial, yet determining Floquet multiplier 1.
	As corollaries, we prove the odd-number limitation as well as any-number limitations for commuting gain matrices, both with real and complex spectrum. 
	In Section III, we study Pyragas control of equilibria. 
	The main invariance principle here concerns the determining as well as resonating centers.
	From this, we derive an odd-number limitation for equilibria as well as any-number limitations for commuting gain matrices with either real or complex spectrum. We conclude the paper with a discussion on applications and generalizations in Section IV.

	\section{Geometric invariance of the determining center for periodic orbits}
	
	In this section, we focus on feedback stabilization of periodic orbits. The main result concerns invariance of the geometric multiplicity of the Floquet multiplier $1$ under Pyragas control (Theorem \ref{lem: preservation}). From this invariance we deduce several limitations on feedback stabilization. Since the presence of the Floquet multiplier $1$ determines whether a periodic solution can in principle be stabilized, we refer to it as the \emph{determining center}. 
	
	The geometric invariance of the determining center is all the more striking since we compare a center in a finite-dimensional system, given by eigenvectors, to a center in an infinite-dimensional system, given by eigenfunctions. 
	Still, there is a one-to-one correspondence between these eigenvectors and eigenfunctions, 
	which we explain in the following.
	
	\subsection{Main result concerning periodic orbits } \label{sec:invariance periodic}
	Throughout we consider the ODE
	\begin{align} \label{eq: time periodic ode}
		\dot{x}(t) = f(x(t), t), \qquad t \geq 0, 
	\end{align}
	with $f: \mathbb{R}^N \times \mathbb{R} \to \mathbb{R}^N$ a $C^1$-function. 
	We make the following, very general, assumptions on system \eqref{eq: time periodic ode}:
	\begin{assumption} \hfill \label{assumption}
		\begin{enumerate}
			\item  The function $f$ is periodic with (not necessarily minimal) period $T > 0$ in its time-argument, i.e., $f(x, t+T) = f(x, t)$ for all $x \in \mathbb{R}^{N}$ and $t \in \mathbb{R}$;
			\item System \eqref{eq: time periodic ode} has a periodic solution $x_\ast(t)$ with (again not necessarily minimal) period $T$. 
		\end{enumerate}
	\end{assumption}
	
	Note that besides periodic non-autonomous ODE, Assumption 1 also includes autonomous ODE which possess a periodic orbit of period $T$. In this case, Assumption 1.1 is trivially fulfilled. Indeed, our results in the rest of the section give a unifying approach to both the autonomous and the non-autonomous case. 
	
	Linearizing around the periodic orbit $x_\ast(t)$, we obtain the system 
	\begin{equation} \label{eq: linvareq ode 1}
		\dot{y}(t) = \partial_x f (x_\ast(t), t) y(t).
	\end{equation}
	By $Y_0(t) \in \mathbb{R}^{N \times N}, \ t \geq 0$ we denote the fundamental solution of the linear matrix differential equation:
	\begin{align}
		\begin{cases}
			\frac{d}{dt} Y_0(t) &= \partial_x f(x_\ast(t), t) Y_0(t), \qquad t > 0; \\
			Y_0(0) &= I,
		\end{cases}
	\end{align}
	where $I: \mathbb{R}^N \to \mathbb{R}^N$ denotes the identity matrix. If $y_0 \in \mathbb{R}^N$, then $y(t) : = Y_0(t) y_0$ solves \eqref{eq: linvareq ode 1} with initial condition $y(0)= y_0$, and hence the fundamental solution can be viewed as `summarizing' the solution information to \eqref{eq: linvareq ode 1}. 
	
	We refer to the matrix $Y_0(T): \mathbb{R}^N \to \mathbb{R}^N$ as the \textbf{monodromy operator} of system \eqref{eq: linvareq ode 1}. We define the \textbf{Floquet multipliers} of system \eqref{eq: linvareq ode 1} as the eigenvalues of the monodromy operator $Y_0(T)$. 
	
	Let $\mu \in \mathbb{C}$ be an eigenvalue of $Y_0(T)$, i.e., $\mu$ is a Floquet multiplier of \eqref{eq: linvareq ode 1}. Then the linear space
	\begin{equation} \label{eq: geometric mp}
		\mathcal{N}\left(\mu I - Y_0(T) \right) : = \{ x \in \mathbb{C}^N \mid \mu x - Y_0(T) x = 0 \}
	\end{equation}
	has dimension at least $1$; and moreover, $\mu$ satisfies
	\begin{equation} \label{eq: ce ode}
		0 = \det \left( \mu I - Y_0(T) \right).
	\end{equation}
	The \textbf{geometric multiplicity} of $\mu$ is defined as the dimension of the linear space \eqref{eq: geometric mp} and the \textbf{algebraic multiplicity} is defined as the order of $\mu$ as a zero of the function $z \mapsto \det\left( z I - Y_0(T) \right)$. The geometric and algebraic multiplicity of a Floquet multiplier can in general be different; however, the algebraic multiplicity is always larger than or equal to the geometric multiplicity. In Theorem \ref{lem: preservation}, we show that the geometric multiplicity of the Floquet multiplier $1$ plays a fundamental role in stabilization. Therefore, we will refer to the geometric eigenspace of the Floquet multiplier $1$, i.e., to the space
	\begin{equation}
		\mathcal{N}\left(1 - Y_0(T) \right)
	\end{equation}
	as the \textbf{determining center}. In this definition, we do \emph{not} assume that the space $\mathcal{N}\left(1 - Y_0(T) \right)$ contains more than the zero vector (i.e. we do \emph{not} assume that $1$ is a Floquet multiplier). If indeed the space $\mathcal{N}\left(1 - Y_0(T) \right)$ is non-trivial, it is a subspace of the center \emph{eigenspace}, but instead we refer to it as `center' for brevity of notation.

	We now apply Pyragas control to the system \eqref{eq: time periodic ode} and write the controlled system as
	\begin{equation} \label{eq: time periodic pyragas}
		\dot{x}(t) = f(x(t), t) + K \left[x(t) - x(t-T) \right]
	\end{equation}
	with nonzero gain matrix $K \in \mathbb{R}^{N \times N}$. For $t \geq 0$, denote by 
	\begin{equation}
		Y_1(t): C \left([-T, 0], \mathbb{R}^N\right) \to C \left([-T, 0], \mathbb{R}^N\right)
	\end{equation}
	the fundamental solution of the linearized equation
	\begin{equation} \label{eq: linvareq dde}
		\dot{y}(t) = \partial_x f(x_\ast(t), t) y(t)+ K \left[y(t) - y(t-T) \right].
	\end{equation}
	The map
	\begin{equation}
		Y_1(T): C \left([-T, 0], \mathbb{R}^N \right) \to C \left([-T, 0], \mathbb{R}^N \right)  \end{equation}
	is a bounded linear operator, which is also compact (see appendix for a proof). Compactness of the operator implies that all non-zero spectral points are eigenvalues of finite algebraic multiplicity. In particular, if $\mu \neq 0$ is an eigenvalue of $Y_1(T)$, then the linear space
	\begin{equation} \label{eq: geom mp dde}
		\mathcal{N}\left( \mu I - Y_1(T) \right) : = \{ \phi \in  C \left([-T, 0], \mathbb{C}^N \right) \mid \mu \phi - Y_1(T) \phi = 0 \}
	\end{equation}
	is finite dimensional and the geometric multiplicity of $\mu$ equals the dimension of the space \eqref{eq: geom mp dde}. The novelty of our approach is that we initially focus on the geometric, rather than the algebraic multiplicity, of the Floquet multipliers. We show that the geometric multiplicity of the Floquet multiplier $1$ is preserved under control. This then serves as an determining principle which decides whether the targeted periodic solution can, in principle, be stabilized.

	The following main result compares the geometric multiplicity of the eigenvalue $1$ of $Y_0(T)$ with the geometric multiplicity of the eigenvalue $1$ of $Y_1(T)$. By convention, if $1$ is \emph{not} an eigenvalue of $Y_0(T)$ (resp., $Y_1(T)$), we say that the geometric multiplicity of the eigenvalue $1$ of $Y_0(T)$ (resp., $Y_1(T)$) is zero. 
	
	\begin{theorem}[Geometric invariance of the determining center under Pyragas control] \label{lem: preservation}
		The geometric multiplicity of the Floquet multiplier $1$ is preserved under Pyragas control.
		That is, for any gain matrix $K \in \mathbb{R}^{N\times N}$, the geometric multiplicity of the eigenvalue 1 of $Y_0(T)$ without control is equal to the geometric multiplicity of the eigenvalue 1 of $Y_1(T)$ with control. 
	\end{theorem}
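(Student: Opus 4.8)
The plan is to produce an explicit linear isomorphism between the eigenspaces $\mathcal{N}\!\left(1 - Y_0(T)\right) \subseteq \mathbb{C}^N$ and $\mathcal{N}\!\left(1 - Y_1(T)\right) \subseteq C\!\left([-T,0],\mathbb{C}^N\right)$, namely the evaluation map $E\colon \phi \mapsto \phi(0)$. The structural fact that singles out the Floquet multiplier $1$ is that the Pyragas term $K[y(t) - y(t-T)]$ vanishes identically on $T$-periodic functions. Unwinding the definition of the solution operator, $Y_1(T)\phi = \phi$ says exactly that the solution $y$ of \eqref{eq: linvareq dde} with $y(\theta) = \phi(\theta)$ on $[-T,0]$ satisfies $y(t+T) = y(t)$ for $t \in [-T,0]$. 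I would first upgrade this to genuine $T$-periodicity of $y$ on all of $[-T,\infty)$: by Assumption \ref{assumption} and $T$-periodicity of $x_\ast$, equation \eqref{eq: linvareq dde} is invariant under the shift $t \mapsto t+T$, so $y(\cdot+T)$ and $y(\cdot)$ both solve \eqref{eq: linvareq dde} and share the same history segment on $[-T,0]$; forward uniqueness then forces them to agree on $[-T,\infty)$. Since a $T$-periodic $y$ satisfies $y(t-T) = y(t)$, the control term drops out and $y$ solves the delay-free linearization \eqref{eq: linvareq ode 1}; hence $y(t) = Y_0(t)\phi(0)$ for $t \geq 0$ and, at $t = T$, $Y_0(T)\phi(0) = y(T) = y(0) = \phi(0)$. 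Thus $E$ is a well-defined linear map from $\mathcal{N}\!\left(1 - Y_1(T)\right)$ into $\mathcal{N}\!\left(1 - Y_0(T)\right)$.

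Next I would verify that $E$ is a bijection. Injectivity: if $\phi(0) = 0$, then the associated $T$-periodic solution satisfies $y(t) = Y_0(t)\cdot 0 = 0$ for $t \geq 0$, hence $y \equiv 0$ by periodicity and $\phi \equiv 0$. Surjectivity: given $y_0 \in \mathcal{N}\!\left(1 - Y_0(T)\right)$, set $y(t) := Y_0(t)y_0$; the cocycle identity $Y_0(t+T) = Y_0(t)Y_0(T)$ (again a consequence of $T$-periodicity of the coefficients of \eqref{eq: linvareq ode 1}) gives $y(t+T) = Y_0(t)Y_0(T)y_0 = Y_0(t)y_0 = y(t)$, so $y$ extends to a $T$-periodic solution of \eqref{eq: linvareq ode 1} on $\mathbb{R}$. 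On this solution the delay term vanishes, so $y$ also solves \eqref{eq: linvareq dde}; with $\phi := y|_{[-T,0]}$, uniqueness for the delay equation shows that the solution started from $\phi$ returns to $\phi$ after time $T$, i.e.\ $Y_1(T)\phi = \phi$, while $E\phi = y(0) = y_0$. Hence $E$ is a linear isomorphism, the two eigenspaces have equal dimension, and the theorem follows; the convention that a missing eigenvalue $1$ counts as geometric multiplicity zero is handled automatically, since a bijection forces both eigenspaces to be trivial simultaneously. As a by-product the delay eigenspace is at most $N$-dimensional, consistent with the compactness of $Y_1(T)$ noted above.

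The only step beyond routine bookkeeping, and hence the part I expect to need care, is the passage from ``$y(t+T) = y(t)$ on the initial interval $[-T,0]$'' to ``$y$ is $T$-periodic''; this is where translation invariance of the linearized delay equation must be combined with uniqueness of its forward solutions. Once this is in place, the eigenspaces of $Y_0(T)$ and $Y_1(T)$ are both identified with the (finite-dimensional) space of $T$-periodic solutions of \eqref{eq: linvareq ode 1} and \eqref{eq: linvareq dde}, respectively, and the vanishing of the Pyragas term on periodic functions makes these two spaces literally coincide, so the dimension count is immediate.
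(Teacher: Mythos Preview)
Your argument is correct and follows essentially the same route as the paper: both identify each eigenspace with the space of $T$-periodic solutions of the respective linearization and then use that the Pyragas term vanishes on $T$-periodic functions to conclude these spaces coincide. The paper states this correspondence more tersely and defers the characterization of eigenfunctions via periodic solutions (including the ``upgrade'' from $y_T = y_0$ to full $T$-periodicity) to the appendix, whereas you make the isomorphism explicit as the evaluation map $E\colon \phi \mapsto \phi(0)$ and spell out the uniqueness argument in place.
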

	
	\begin{proof}
		We show that there is a one-to-one correspondence between eigenvectors to the eigenvalue $1$ for $Y_0(T)$ and eigenfunctions to the eigenvalue $1$ of $Y_1(T)$. The statement of the claim then follows.
		
		On the one hand, the vector $y_0 \in \mathbb{C}^{N} \backslash \{0 \}$ is an eigenvector of $Y_0(T)$ with eigenvalue $1 \in \mathbb{C}$ if and only if \eqref{eq: linvareq ode 1} has a solution $y(t)$ that satisfies
		\begin{align} \label{eq: eigenvector ode}
			\begin{cases}
				y(t+T) =  y(t), \quad t \in \mathbb{R} \\
				y(0) = y_0;
			\end{cases}
		\end{align}
		see also Appendix \ref{sec:appendix ode}. On the other hand, $\phi \in C \left([-T,0], \mathbb{C}^N \right) \backslash \{0 \}$ is an eigenfunction of $Y_1(T)$ with eigenvalue $1 \in \mathbb{C}$ if and only if \eqref{eq: linvareq dde} has a solution $y(t)$ that satisfies
		\begin{align}  \label{eq: eigenvector dde}
			\begin{cases}
				y(t+T) =  y(t), \quad t \geq 0 \\
				y(t) = \phi(t), \quad t \in [-T, 0];
			\end{cases}
		\end{align}
		see also Appendix \ref{sec:appendix dde}. But since the control term $K \left[y(t) - y(t-T) \right]$ vanishes on $T$-periodic functions, \eqref{eq: linvareq dde} has a solution of the form \eqref{eq: eigenvector dde} \emph{if and only if} \eqref{eq: linvareq ode 1} has a solution of the form \eqref{eq: eigenvector ode}.
		We conclude that there is a one-to-one correspondence between eigenvectors of the eigenvalue $1$ of $Y_0(T)$ and eigenfunctions of the eigenvalue 1 of $Y_1(T)$, which proves the claim. 
	\end{proof}
	
	\subsection{Corollary: The odd-number limitation} \label{sec: odd number}
	
	Strictly speaking, our main result does not make any statement about stabilization via Pyragas control, only addressing the seemingly unimportant center.
	However, we can use it to deduce a number of restrictions, that is, necessary conditions, on Pyragas control. 
	We start with the long-known odd-number limitation, which follows easily and  can now be fully understood in this context of geometric multiplicities.
	
	Previous statements of the odd-number limitation are formulated for either autonomous or non-autonomous systems. In contrast, we formulate the odd-number limitation for \emph{non-degenerate periodic orbits}, i.e., periodic orbits that do not have a Floquet multiplier $1$. By shifting the focus from (non)-autonomous systems to (non)-degenerate periodic orbits, we clarify the confusion in the literature regarding the odd-number limitation.
	
	For non-degenerate periodic orbits, the absence of a Floquet multiplier $1$ in the uncontrolled system forbids stabilization, as no other path for real eigenvalues is possible. 
	In autonomous systems, every periodic orbit (provided it is not an equilibrium) is degenerate, since translation along the periodic orbit leads to a trivial Floquet multiplier. A clever choice of the gain matrix can allow for a change of the algebraic multiplicity while leaving the geometric multiplicity invariant, thus achieving stabilization.

	As a technical prerequisite, we first state and prove the odd-number limitation on the linear level. This has the advantage that, in blockdiagonalizing ODE, the linear statement can be applied  to individual blocks.
	
	\begin{prop} \label{prop: odd number technical}
		Consider the linear system
		\begin{equation} \label{eq: A}
			\dot{y}(t) = A(t) y(t)
		\end{equation}
		with $y(t) \in \mathbb{R}^N$ and $A(t) \in \mathbb{R}^{N \times N}$. Assume that there exists a time $T > 0$ such that $A(t+T) = A(t)$ for all $t \in \mathbb{R}$. Moreover, assume that system \eqref{eq: A} does not have a Floquet multiplier equal to 1 and that it possesses an odd number (counting algebraic multiplicities) of real Floquet multipliers strictly larger than 1. 
		
		Then, for all gain matrices $K \in \mathbb{R}^{N \times N}$, the controlled system
		\begin{equation} \label{eq: A control}
			\dot{y}(t) = A(t) y(t) + K \left[y(t) - y(t-T) \right]
		\end{equation}
		has at least one real Floquet multiplier larger than 1. 
	\end{prop}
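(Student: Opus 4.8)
My plan is to reduce the delay problem \eqref{eq: A control} to a one-parameter family of ordinary differential equations and then run an intermediate value argument on a scalar characteristic function. For a real number $\mu > 0$, any nontrivial solution $y$ of \eqref{eq: A control} satisfying the Floquet relation $y(t+T) = \mu\, y(t)$ also satisfies $y(t-T) = \mu^{-1} y(t)$, so the delayed term collapses and $y$ solves the ODE
\begin{equation*}
\dot y(t) = \bigl[A(t) + (1-\mu^{-1})K\bigr] y(t)
\end{equation*}
subject to the boundary condition $y(T) = \mu\, y(0)$. Writing $\Phi_\mu(t)$ for the fundamental solution of this ODE, with $\Phi_\mu(0) = I$, the boundary condition becomes the finite-dimensional eigenvalue equation $\Phi_\mu(T)\,y(0) = \mu\, y(0)$. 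Thus $\mu > 0$ is a Floquet multiplier of the controlled system \eqref{eq: A control} if and only if the scalar function
\begin{equation*}
h(\mu) := \det\bigl(\mu I - \Phi_\mu(T)\bigr)
\end{equation*}
vanishes at $\mu$. The one point that needs genuine care is the converse direction: given $y(0)$ with $\Phi_\mu(T)y(0)=\mu y(0)$, one extends $y(t):=\Phi_\mu(t)y(0)$ from $[0,T]$ to all of $\mathbb{R}$ by $y(t+T) = \mu y(t)$, and must check the result is a bona fide $C^1$-solution of \eqref{eq: A control}; continuity at the grid points $t=nT$ is exactly the eigenvalue condition, and matching of the one-sided derivatives there uses the periodicity $A(T) = A(0)$. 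This is the \emph{main obstacle}, and it is mild; everything else is soft.

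Granting the reduction, the argument is an intermediate value argument for $h$ on $(0,\infty)$, where $h$ is real-valued and continuous, indeed real-analytic, since $A(t)$, $K$ and $1-\mu^{-1}$ are real and $\mu \mapsto \Phi_\mu(T)$ depends analytically on $\mu$ away from $0$. I first evaluate at $\mu = 1$: because $1-1^{-1}=0$ we have $\Phi_1(T) = Y_0(T)$, the monodromy matrix of the uncontrolled system \eqref{eq: A}, so $h(1) = \det(I - Y_0(T))$. Non-real Floquet multipliers of \eqref{eq: A} occur in conjugate pairs and contribute positive factors $|1-\mu_i|^2$ to this determinant, real multipliers below $1$ contribute positive factors, real multipliers above $1$ contribute negative factors, and $1$ itself is not a multiplier by hypothesis; hence the sign of $h(1)$ equals $(-1)^m$ where $m$ is the number of real Floquet multipliers of \eqref{eq: A} strictly larger than $1$, counted with algebraic multiplicity. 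By assumption $m$ is odd, so $h(1) < 0$. (One may also read the non-vanishing $h(1)\neq 0$ off Theorem \ref{lem: preservation}: the determining center stays trivial under control.)

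Next I examine $\mu \to +\infty$. Here $1 - \mu^{-1}\in(0,1)$, and since the monodromy matrix of $\dot y = [A(t)+cK]y$ depends continuously on $c$ over the compact interval $c\in[0,1]$, the matrices $\Phi_\mu(T) = \Phi_{1-\mu^{-1}}(T)$ stay uniformly bounded for $\mu\ge 1$. Consequently $h(\mu)/\mu^{N} = \det\bigl(I - \mu^{-1}\Phi_\mu(T)\bigr) \to 1$, so $h(\mu) > 0$ for all sufficiently large $\mu$; fix $\mu_0 > 1$ with $h(\mu_0) > 0$. Since $h$ is continuous on $[1,\mu_0]$ with $h(1) < 0 < h(\mu_0)$, the intermediate value theorem yields $\mu_\ast\in(1,\mu_0)$ with $h(\mu_\ast)=0$. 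By the reduction, $\mu_\ast$ is a real Floquet multiplier of the controlled system \eqref{eq: A control} larger than $1$, which is the claim. The degenerate case $K=0$ is trivially covered, since then \eqref{eq: A control} coincides with \eqref{eq: A}.
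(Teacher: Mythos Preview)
Your argument is correct, but it follows a different route from the paper's.  The paper fixes the target Floquet multiplier and instead homotopes the gain, considering $\alpha K$ for $\alpha\in[0,1]$; it then tracks the spectrum of the infinite-dimensional monodromy operator $Y_\alpha(T)$, using compactness, continuous dependence of eigenvalues, and the invariance principle of Theorem~\ref{lem: preservation} to conclude that the parity of the count of real multipliers in $(1,\infty)$ cannot change.  You instead homotope the candidate multiplier $\mu$, collapse the delay equation to the finite-dimensional characteristic matrix $\mu I-\Phi_\mu(T)$, and run an intermediate value argument on its determinant.  This is precisely the characteristic-matrix approach the paper attributes to Nakajima and deliberately bypasses; your version is clean and self-contained, since the observation $h(1)=\det(I-Y_0(T))\neq 0$ already encodes the absence of a determining center without appealing to Theorem~\ref{lem: preservation}.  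The trade-off is that your reduction exploits the special feature that the delay equals the period (so that $y(t-T)=\mu^{-1}y(t)$ closes up), whereas the paper's homotopy-in-$\alpha$ argument is meant to advertise that the odd-number limitation follows from the geometric invariance principle alone and is thus more robust to variations of the control scheme.
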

	
	\begin{proof}
		We first give an intuitive argument: By assumption, the linearized system \eqref{eq: A} has an odd number of real Floquet multipliers on the line $(1, \infty)$. 
		But, since we use a real control matrix $K \in \mathbb{R}^{N \times N}$, non-real Floquet multipliers of the controlled system appear in complex conjugated pairs. Therefore, it is impossible to change the parity of eigenvalues on the line $(1, \infty)$ by leaving the real axis and at least one Floquet multiplier stays on the line $(1, \infty)$. This Floquet multiplier can only move into the unit circle by crossing the point $1 \in \mathbb{C}$. However, this is forbidden by the invariance of the determining center (Theorem \ref{lem: preservation}).

		To make this argument precise, fix a gain matrix $K \in \mathbb{R}^{N \times N}$ and introduce the homotopy parameter $\alpha \in [0,1]$:
		\begin{equation}\label{eq: parameter}
			\dot{y}(t) = A(t) y(t) + \alpha K \left[y(t) - y(t-T) \right].
		\end{equation}
		The monodromy operator $Y_\alpha(T)$ for \eqref{eq: parameter} is compact for all $\alpha \in [0,1]$, and the map $\alpha \mapsto Y_\alpha(T)$ is continuous; therefore the Floquet multpliers of \eqref{eq: parameter} (i.e., the eigenvalues of $Y_\alpha(T)$) depend continuously on $\alpha$ (in the sense of \cite{Kato95}).
		
		We first show that the number of Floquet multipliers outside the unit circle cannot change by multipliers ``coming from infinity''. Indeed, let $\mu$ be an eigenvalue of $Y_\alpha(T)$ and let $\phi$ be such that
		\begin{equation}
			Y_\alpha(T) \phi = \mu \phi, \qquad \norm{\phi} = 1,
		\end{equation}
		i.e., $\phi$ is an eigenfunction with norm equal to $1$. Then 
		\begin{equation}
			\left| \mu \right| = \norm{\mu \phi} = \norm{Y_\alpha(T) \phi} \leq \norm{Y_\alpha(T)}.   
		\end{equation}
		Thus, if $\mu$ is an eigenvalue of $Y_\alpha(T)$, then $\left| \mu \right| \leq \norm{Y_\alpha(T)}$, i.e., the norm of $\mu$ can be bounded by the operator norm of $Y_\alpha(T)$. For every $\alpha \in [0,1]$, the operator $Y_\alpha(T)$ is bounded and the map $ \alpha \mapsto Y_\alpha(T)$ is continuous. Therefore, there exists a $0 < C < \infty$ such that 
		\begin{equation}
			\sup_{\alpha \in [0,1]} \norm{Y_\alpha(T)} < C. 
		\end{equation}
		We conclude that if $\mu$ is an eigenvalue of $Y_\alpha(T)$, then $\left| \mu \right| < C$. Hence the number of eigenvalues of $Y_\alpha(T)$ that lies outside the unit circle cannot change by an eigenvalue ``coming from infinity''; the number of eigenvalues of $Y_\alpha(T)$ outside the unit circle can only change by an eigenvalue crossing the unit circle.

		Now, for $\alpha \in [0,1]$, define 
		\begin{equation}
			n_\alpha = \# \{ \mu \in \sigma_{pt}(Y_\alpha(T)) \mid \mu \in (1, \infty) \}   
		\end{equation}
		i.e., $n_\alpha$ is the number of eigenvalues of $Y_\alpha(T)$ lying on the half-line $(1, \infty)$. 
		Note that the parity of $n_\alpha$ can only change by an eigenvalue crossing the point $1 \in \mathbb{C}$: 
		Indeed, if $\mu$ is an eigenvalue of $Y_\alpha(T)$, then $\overline{\mu}$ is an eigenvalue as well, so non-real eigenvalues appear in pairs which do not affect the parity of $n_\alpha$.
		
		The only way left is through the real line, i.e., through $1 \in \mathbb{C}$.
		However, by assumption, $1 \in \mathbb{C}$ is not a Floquet multiplier of \eqref{eq: parameter} for $\alpha = 0 $ and, by Theorem \ref{lem: preservation}, it will not be a Floquet multiplier of \eqref{eq: parameter} for any $\alpha \in (0, 1]$. 
		Hence it is impossible to change the parity of $n_\alpha$ through Pyragas control. Since by assumption, $n_{\alpha = 0}$ is odd, we conclude that $n_{\alpha}$ is odd for all $\alpha \in [0,1]$, and \eqref{eq: A control} has at least 1 Floquet multiplier larger than one. 
	\end{proof}
	
	Note how the proof combines continuous dependence on parameters of the Floquet multipliers with the geometric invariance of the center to conclude that stabilization is impossible. 
	The assumption that $1 \in \mathbb{C}$ is not a multiplier of the uncontrolled system is crucial here. To facilitate terminology on this essential assumption, we say that $x_\ast$ is \emph{non-degenerate} as a solution of \eqref{eq: time periodic ode} if the linearization \eqref{eq: linvareq ode 1} does \emph{not} have a Floquet multiplier 1. 
	
	For non-degenerate periodic solutions of non-autonomous ODE, we recover the well-known odd-number limitation \cite{NAK97}:
	
	\begin{cor}[Nakajima\cite{NAK97}, '97] \label{thm: nakajima} Consider the system \eqref{eq: time periodic ode} satisfying Assumption \ref{assumption}. Assume that $x_\ast$ is non-degenerate as a solution of \eqref{eq: time periodic ode} and that the linearized equation \eqref{eq: linvareq ode 1} has an odd number (counting algebraic multiplicities) of real Floquet multipliers larger than $1$. 
		
		Then, for every gain matrix $K \in \mathbb{R}^{N \times N}$, the periodic solution $x_\ast$ is unstable as a solution of the controlled system \eqref{eq: time periodic pyragas}. 
	\end{cor}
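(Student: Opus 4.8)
The plan is to read this corollary as a direct consequence of Proposition~\ref{prop: odd number technical}, applied to the linearization of the controlled system about $x_\ast$, combined with the principle of linearized instability for periodic solutions of delay differential equations. The parity/homotopy obstruction that does the real work has already been established in the proposition, so what remains is essentially to match hypotheses and to pass from the linear statement back to the nonlinear orbit.

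First I would identify the linear data. Set $A(t) := \partial_x f(x_\ast(t), t)$. Assumption~\ref{assumption} forces $A$ to be $T$-periodic: by Assumption~1.1 we have $f(\cdot, t+T) = f(\cdot, t)$, and by Assumption~1.2 we have $x_\ast(t+T) = x_\ast(t)$, so differentiating in $x$ gives $A(t+T) = A(t)$ for all $t$. With this identification, \eqref{eq: linvareq ode 1} is exactly \eqref{eq: A}. Moreover, since the Pyragas term is linear and vanishes on the $T$-periodic orbit $x_\ast$, the function $x_\ast$ is still a solution of the controlled equation \eqref{eq: time periodic pyragas}, and its linearization about $x_\ast$ is precisely \eqref{eq: linvareq dde}, which in turn is exactly \eqref{eq: A control} for this choice of $A$. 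Finally, the hypotheses of the corollary — $x_\ast$ non-degenerate, i.e.\ $1$ is not a Floquet multiplier of \eqref{eq: linvareq ode 1}, together with an odd number (counted with algebraic multiplicity) of real Floquet multipliers in $(1,\infty)$ — are verbatim the hypotheses of Proposition~\ref{prop: odd number technical}.

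Next I would apply Proposition~\ref{prop: odd number technical}: for every gain matrix $K \in \mathbb{R}^{N\times N}$ the controlled linearization \eqref{eq: linvareq dde} possesses a real Floquet multiplier $\mu \in (1,\infty)$. In particular the monodromy operator $Y_1(T)$ has a point of spectrum of modulus strictly larger than $1$. The last step is then the principle of linearized instability for periodic solutions of retarded functional differential equations: if the linearized period map $Y_1(T)$ has spectrum outside the closed unit disk, then $x_\ast$ is unstable as a solution of the nonlinear equation \eqref{eq: time periodic pyragas}. Here I would cite the standard stability theory (e.g.\ Hale--Verduyn Lunel), noting that $Y_1(T)$ is compact so the relevant spectral point is a genuine eigenvalue.

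I expect the only genuinely nontrivial point to be this final invocation — transferring instability from the linearized period map to the periodic orbit of the nonlinear delay equation — since it relies on external functional-analytic machinery rather than on the geometric invariance developed in the paper; everything else is bookkeeping that aligns the corollary's setting with Proposition~\ref{prop: odd number technical}.
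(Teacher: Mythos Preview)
Your proposal is correct and follows essentially the same route as the paper: set $A(t) = \partial_x f(x_\ast(t), t)$, check that the hypotheses of Proposition~\ref{prop: odd number technical} are met, apply it to obtain a Floquet multiplier larger than $1$ for the controlled linearization, and conclude instability of $x_\ast$. The paper's proof is terser (it leaves the $T$-periodicity of $A$ and the linearized-instability principle implicit), but your added bookkeeping is accurate and does not change the argument.
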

	\begin{proof}
		We apply Proposition \ref{prop: odd number technical} with $A(t) = \partial_x f(x_\ast(t), t)$: By assumption, 
		\begin{equation}
			\dot{y}(t) = \partial_x f(x_\ast(t), t) y(t)    
		\end{equation} 
		does not have a Floquet multiplier 1 and has an odd number of Floquet multipliers larger than 1. Thus Proposition \ref{prop: odd number technical} implies that 
		\begin{equation}
			\dot{y}(t) =  \partial_x f(x_\ast(t), t) y(t) + K \left[y(t) - y(t-T) \right] 
		\end{equation}
		has a Floquet multiplier larger than 1 for any gain $K$. Therefore $x_\ast$ is unstable as a solution of \eqref{eq: time periodic pyragas}. 
	\end{proof}
	
	 For any linear, time-periodic DDE, the eigenvalues of the monodromy operator are also captured by a finite dimensional function called the characteristic matrix function \cite{KaashoekVL92, Sieber11, KaashoekVL20}. For system \eqref{eq: A control}, where the time delay is equal to the period, the expression for this characteristic matrix function is relatively explicit. In fact, the original proof in \cite{NAK97} relies heavily on the explicit form of the characteristic matrix function. This is unnecessary, as the argument here shows: the odd number limitation follows directly from the invariance principle in Theorem \ref{lem: preservation}; it does \emph{not} rely on the fact that in system \eqref{eq: A control} the delay is equal to the period.

	Let us shortly reflect the situation for autonomous systems \eqref{eq: time periodic ode}, i.e., if
	\begin{equation} \label{eq: autonomous}
		\partial_t f(x, t) = 0 \qquad \mbox{for all } x \in \mathbb{R}^n \mbox{ and } t \in \mathbb{R}.
	\end{equation}
	In this case, if the periodic orbit $x_\ast$ is not an equilibrium, we differentiate the relation
	\begin{equation}
		\dot{x}_\ast(t) = f(x_\ast(t), t)
	\end{equation}
	with respect to $t$ to see that $\dot{x}_\ast(t)$ is a non-zero, $T$-periodic solution of \eqref{eq: linvareq ode 1}. 
	It follows that \eqref{eq: linvareq ode 1} has a Floquet multiplier $1$ (called the \emph{trivial Floquet multiplier}). 
	Thus, if system \eqref{eq: time periodic ode} is in fact autonomous, the solution $x_\ast$ is degenerate and the assumptions of Corollary \ref{thm: nakajima} are not satisfied. 
	
	Moreover, if \eqref{eq: time periodic ode} is autonomous, Theorem \ref{lem: preservation} implies that the geometric multiplicity of the trivial Floquet multiplier is preserved.
	However, its  \emph{algebraic multiplicity} is not fixed under control, and changing the algebraic multiplicity is necessary for successful stabilization. Indeed, the results from Hooton and Amann \cite{HOO12} on stabilization in autonomous systems have a natural interpretation in terms of the algebraic multiplicity of the trivial Floquet multiplier (as will be discussed in more detail upcoming work by the first author \cite{deWolff21}). Also the positive stabilization result \cite{FIE07} for an autonomous system shows stabilization through the center generated by the trivial Floquet multiplier 1.
	
	Some of the results  up to this point -- most notably, the invariance principle and the odd-number limitation -- \emph{might} (!) apply to a more general class of noninvasive control terms than `only' Pyragas control. 
		However, before drawing conclusions on different control terms, one should carefully consider the functional analytical framework, in particular, whether the monodromy operator is still a Riesz operator and whether eigenvalues depend continuously on parameters.

	\subsection{Corollary: Any-number limitation for commuting gain matrices with real spectrum} \label{sec:real periodic}

	In addition to the odd-number limitation, we  obtain direct restrictions on the choice of the gain matrix, again directly from the geometric invariance of the determining center.
	This will be explored in this subsection:
	In summary, in combination with real Floquet multipliers, stabilization is impossible if the gain matrix commutes with the linearization. 
	In particular, scalar gains are excluded.
	This statement is independent on the actual number of real Floquet multipliers larger than 1, therefore we call it the \emph{any-number limitation for commuting gain matrices}.
	
	We first formulate our results on a linear level. We use Floquet theory (see also Appendix \ref{sec:appendix ode}--\ref{sec:appendix dde}) to transform the linear, time-periodic system  $y(t) = A(t) y(t)$ into an autonomous one.
	Assume that the map $t \mapsto A(t)$ is $T$-periodic; denote by $Y_0(T)$ its monodromy operator. 
	Since $Y_0(T)$ is invertible (i.e., $0 \not \in \sigma(Y_0(T, 0))$), there exists a matrix $B \in \mathbb{C}^{N \times N}$ such that
	\begin{subequations}
		\begin{equation} \label{eq: B defn}
			Y_0(T) = e^{BT}.
		\end{equation}
		Floquet theory for ODE (see Appendix \ref{sec:appendix ode})  gives that the map
		\begin{equation} \label{eq: P defn}
			P(t): = Y_0(t) e^{- Bt}
		\end{equation}
	\end{subequations}
	is $T$-periodic; moreover, the coordinate transformation $y(t) = P(t) v(t)$ transforms the time-periodic system $\dot{y}(t)=A(t)y(t)$ into the linear, autonomous system
	\begin{equation} \label{eq: B ode}
		\dot{v}(t) = B v(t).
	\end{equation}
	In the next proposition, we consider gain matrices $K$ that are \emph{commutative} in the sense that 
	\begin{equation}\label{commutative}
		K P(t) = P(t) K \quad \& \quad K B = BK 
	\end{equation}
	for all $t > 0$.
	This seemingly restrictive assumption can easily be fulfilled, as it is trivially true for scalar gains, or, in the case of symmetric systems, for any gain matrix which leaves the periodic orbit invariant pointwise.
	In fact, in this case, gain matrices fulfilling assumption \eqref{commutative} seem a very natural choice.
	
	In the next proposition, we provide an analogous statement to Proposition \ref{prop: odd number technical}, but with the above assumptions on the gain matrix.
	Regarding the Floquet theory of the uncontrolled system, we assume that there \emph{exists} a real Floquet multiplier strictly  larger than 1, but we make no assumptions on the number or parity of such multipliers. Moreover, we do not make assumptions on the presence of a Floquet multiplier 1, and thus the result can be applied in both autonomous and non-autonomous settings.

	\begin{prop} \label{prop: even number technical}
		Consider the linear, non-autonomous system 
		\begin{equation} \label{eq: A 2}
			\dot{y}(t) = A(t) y(t)
		\end{equation}
		and assume that there exists $T > 0$ such that $A(t+T) = A(t)$ for all $t \in \mathbb{R}$. Assume that system \eqref{eq: A 2} has at least one real Floquet multiplier larger than $1$. 
		
		Assume that the gain matrix $K \in \mathbb{R}^{N \times N}$ satisfies $\sigma(K) \subseteq \mathbb{R}$. Moreover, with $P(t)$ and $B$ as in \eqref{eq: B defn}--\eqref{eq: P defn}, assume that 
		\begin{subequations}
			\begin{align}
				K P(t) = & \ P(t) K \quad\mbox{ for all }t \in \mathbb{R};\label{eq: commute P} \\
				K B = & \ B K. \label{eq: commute B}
			\end{align}
		\end{subequations}
		Then the controlled system
		\begin{equation} \label{eq: A control 2}
			\dot{y}(t) = A(t) y(t) + K \left[y(t) - y(t-T) \right]
		\end{equation}
		has at least one real Floquet multiplier larger than $1$. 
	\end{prop}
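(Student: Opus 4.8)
The plan is to collapse the infinite-dimensional spectral problem to a finite-dimensional one (exploiting that the delay equals the period) and then to use that $K$ commutes with the monodromy operator to reduce everything to a single scalar equation, solvable by the intermediate value theorem. First I would set up the characteristic matrix function of \eqref{eq: A control 2}: exactly as in the proof of Theorem~\ref{lem: preservation}, but now for a general eigenvalue, a nonzero $\mu \in \mathbb{C}$ is a Floquet multiplier of \eqref{eq: A control 2} (an eigenvalue of $Y_1(T)$) if and only if \eqref{eq: A control 2} admits a nontrivial solution with $y(t+T) = \mu\, y(t)$. For such a solution $y(t-T) = \mu^{-1} y(t)$, so its restriction to $[0,T]$ solves the ODE $\dot y(t) = [A(t) + (1-\mu^{-1})K]\, y(t)$; hence $\mu$ is a Floquet multiplier exactly when $\mu \in \sigma(Z_\mu(T))$, i.e.\ $\det(\mu I - Z_\mu(T)) = 0$, where $Z_\mu(t)$ is the fundamental solution of $\dot Z = [A(t) + (1-\mu^{-1})K]\, Z$, $Z(0) = I$. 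The equivalence between the zeros of this characteristic matrix function and the eigenvalues of the compact operator $Y_1(T)$ is classical for period-equals-delay equations \cite{KaashoekVL92, Sieber11, KaashoekVL20}.

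Next I would bring in the commutativity hypothesis. Since $Y_0(t) = P(t) e^{Bt}$ and $K$ commutes with $P(t)$ and with $B$ by \eqref{eq: commute P}--\eqref{eq: commute B}, it follows that $K Y_0(t) = Y_0(t) K$ for all $t$, and a direct differentiation shows that $Z_\mu(t) = Y_0(t)\, e^{(1-\mu^{-1})Kt}$ solves the initial value problem defining $Z_\mu$; by uniqueness $Z_\mu(T) = Y_0(T)\, e^{(1-\mu^{-1})KT}$. By hypothesis $Y_0(T)$ has a real eigenvalue $\mu_0 > 1$; since $K$ commutes with $Y_0(T)$ it leaves the eigenspace $\mathcal N(\mu_0 I - Y_0(T))$ invariant, and since $\sigma(K) \subseteq \mathbb{R}$ the restriction of $K$ to this eigenspace has a real eigenvalue $\kappa_0$, with some common eigenvector $w \neq 0$: $Y_0(T) w = \mu_0 w$ and $K w = \kappa_0 w$.

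Restricting the characteristic equation to $\mathrm{span}(w)$ then gives $Z_\mu(T) w = \mu_0\, e^{(1-\mu^{-1})\kappa_0 T} w$, so $\mu$ is a Floquet multiplier of \eqref{eq: A control 2} whenever
\begin{equation*}
	h(\mu) := \mu - \mu_0\, e^{\kappa_0 T (1 - 1/\mu)} = 0 .
\end{equation*}
Since $h(1) = 1 - \mu_0 < 0$ while $h(\mu) \to +\infty$ as $\mu \to +\infty$ (the exponential factor tends to the finite constant $e^{\kappa_0 T}$), the intermediate value theorem produces a zero $\mu_1 \in (1,\infty)$, which is a real Floquet multiplier of the controlled system larger than $1$ --- the assertion. (One could instead Floquet-transform \eqref{eq: A 2}--\eqref{eq: A control 2} to the autonomous delay equation $\dot v(t) = B v(t) + K[v(t)-v(t-T)]$ and argue with its characteristic function; the route above has the advantage of not requiring a choice of logarithm of $Y_0(T)$.)

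The differentiation identity for $Z_\mu$ and the intermediate value step are routine. The main obstacle I anticipate is the first step: matching the eigenvalues of the infinite-dimensional monodromy operator $Y_1(T)$ with the zeros of the finite-dimensional map $\mu \mapsto \det(\mu I - Z_\mu(T))$, and in particular verifying that a solution built from an eigenvector of $Z_\mu(T)$ genuinely defines an eigenfunction of $Y_1(T)$ --- this is the functional-analytic bookkeeping already performed for the eigenvalue $1$ in Theorem~\ref{lem: preservation}, now needed for a general nonzero $\mu$.
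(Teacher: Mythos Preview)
Your proposal is correct and takes a genuinely different route from the paper. The paper first Floquet-transforms \eqref{eq: A control 2} via $y(t)=P(t)v(t)$ into the autonomous delay equation $\dot v(t)=Bv(t)+K[v(t)-v(t-T)]$, then finds a common eigenvector of $B$ and $K$ (with eigenvalues $\lambda_\ast>0$ and $k_\ast\in\mathbb{R}$), reduces to the scalar DDE $\dot w=\lambda_\ast w + k_\ast[w(t)-w(t-T)]$, and finally invokes Proposition~\ref{prop: odd number technical} (the linear odd-number limitation, itself proved via Theorem~\ref{lem: preservation} and a homotopy in the gain). You instead stay with the non-autonomous equation, exploit that the delay equals the period to set up the characteristic matrix $\mu\mapsto \mu I - Z_\mu(T)$, use commutativity to compute $Z_\mu(T)=Y_0(T)e^{(1-\mu^{-1})KT}$ explicitly, find a common eigenvector of $Y_0(T)$ and $K$, and close with a one-line intermediate value theorem argument on $h(\mu)=\mu-\mu_0 e^{\kappa_0 T(1-1/\mu)}$. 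Your argument is more elementary and self-contained: it bypasses both the homotopy machinery of Proposition~\ref{prop: odd number technical} and, as you note, any choice of a logarithm $B$ of $Y_0(T)$ (a genuine convenience, since the paper's Step~2 quietly re-selects $B$ so that $\lambda_\ast$ is real, which sits awkwardly with $B$ already being fixed in the hypothesis). The paper's route, on the other hand, reinforces its central narrative that all limitations flow from the geometric invariance of the determining center. The obstacle you flag --- that a root of $h$ really yields an eigenfunction of $Y_1(T)$ --- is handled by the appendix Lemma in Section~\ref{sec:appendix dde}, so there is no gap.
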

	\begin{proof}
		We divide the proof into three steps: 
		
		\emph{Step 1:} We first transform the DDE \eqref{eq: A control 2} into an autonomous DDE. Recall that the coordinate transform $y(t) =P(t) v(t)$ transforms solutions of \eqref{eq: A 2} into solutions of \eqref{eq: B ode}. Therefore, the coordinate transformation $y(t) = P(t) v(t)$ transforms solutions of \eqref{eq: A control 2} into solutions of 
		\begin{align}
			\dot{v}(t) &= B v(t) + P(t)^{-1} K \left[P(t) v(t) - P(t) v(t-T) \right] \\
			&=B v(t) + K \left[v(t) - v(t-T)\right],
		\end{align}
		where we have used that $P(t+T) = P(t)$ and identity \eqref{eq: commute P}. Thus, the invertible, time-periodic coordinate transformation $y(t) =P(t) v(t)$ transforms solutions of \eqref{eq: A control 2} into solutions of 
		\begin{equation} \label{eq: transformed}
			\dot{v}(t)  = B v(t) + K \left[v(t) - v(t-T)\right].
		\end{equation}
		
		\emph{Step 2}: Next we use the commutativity property \eqref{eq: commute B} to find a common eigenvector for the unstable eigenvalue of the uncontrolled system and the gain matrix. Since by assumption $Y_0(T) = e^{BT}$ has an eigenvalue $\mu_\ast > 1$, we can choose $B$ such that $B$ has an eigenvalue $\lambda_\ast > 0$. Now let $y \in \mathbb{C}^N$ be such that $\lambda_\ast y- B y = 0$. Then
		\begin{equation}
			(\lambda_\ast I -B) Ky = K (\lambda_\ast I - B) y = 0
		\end{equation} 
		since, by \eqref{eq: commute B}, $B$ and $K$ commute. Therefore the space
		\begin{equation} \label{eq:space}
			\mathcal{N}  \left( \lambda_\ast I - B \right) : = \{y \in \mathbb{C}^N \mid (\lambda_\ast I - B) y = 0 \}
		\end{equation} 
		is invariant under $K$. 
		Hence we can find a non-zero $y_\ast \in\mathcal{N} \left( \lambda_\ast I - B \right)$ and a $k_\ast \in \sigma(K)$ such that $K y_\ast = k_\ast y_\ast$. So we conclude we can find a $y_\ast \in \mathbb{C}^N \backslash \{0 \}$ such that 
		\begin{equation} \label{eq: eigenvector}
			B y_\ast = \lambda_\ast y_\ast, \quad K y_\ast = k_\ast y_
			\ast
		\end{equation}
		i.e., $y_\ast$ is a simultaneous eigenvector for $B$ and $K$.

		\emph{Step 3:} We reduce to a 1-dimensional, real valued DDE using the common eigenvector from Step 2. Consider the real-valued, scalar DDE
		\begin{equation} \label{eq: scalar dde}
			\dot{w}(t) = \lambda_\ast w(t) + k_\ast \left[w(t) - w(t-T)\right].
		\end{equation}
		Since $\lambda_\ast > 0$, the ODE
		\begin{equation}
			\dot{w}(t) = \lambda_\ast w(t) 
		\end{equation} 
		has one Floquet multiplier $e^{\lambda_\ast T}  >1$ and no trivial Floquet multiplier. Therefore, Proposition \ref{prop: odd number technical} implies that \eqref{eq: scalar dde} has at least one trivial Floquet multiplier $\mu$ larger than $1$, i.e. \eqref{eq: scalar dde} has a solution $w_\mu(t)$ with $w_\mu(t+T) = \mu w_\mu(t)$. 
		
		If $w(t) \in \mathbb{R}$ is a solution of \eqref{eq: scalar dde} and $y_\ast$ is as in \eqref{eq: eigenvector}, then $v(t) = w(t) y_\ast$ solves
		\begin{align}
			\dot{v}(t) &= \lambda_\ast w(t) y_\ast + k_\ast \left[w(t) y_\ast - w(t-T) y_\ast \right] \\
			&= B v(t) + K \left[v(t) - v(t-T) \right].
		\end{align}
		Thus, if $w(t)$ solves \eqref{eq: scalar dde}, then $v(t) = w(t) y_\ast$ solves \eqref{eq: transformed}. In particular, $v_\mu(t) : = w_\mu(t) y_\ast$ is a solution of \eqref{eq: transformed} with $v_\mu(t+T) = \mu v_\mu(t)$. This implies that $y_\mu(t) := P(t) v_\mu(t) $ is a solution of \eqref{eq: A control 2} with $y_\mu(t+T) = \mu y_\mu(t)$, i.e., $\mu > 1$ is a Floquet multiplier of \eqref{eq: A control 2}. 
	\end{proof}
	
	Remarkably, Proposition \ref{prop: even number technical} does not make any assumptions on the multiplicity of the unstable Floquet multiplier in the uncontrolled system. Therefore Proposition \ref{prop: even number technical} has a wide applicability (see also Corollary \ref{cor:any number} below). However, if in Proposition \ref{prop: even number technical} we additionally assume that system \eqref{eq: A 2} has a Floquet multiplier larger than 1 \emph{with odd geometric multiplicity}, we can drop the assumption that $K$ has real spectrum. 
	
	Indeed, as in the proof of Proposition \ref{prop: even number technical}, the space \eqref{eq:space} is invariant under $K$. Hence, if this space is odd-dimensional, the real matrix $K$ has at least one real eigenvalue in this space. From there, Step 3 of the proof of Proposition \ref{prop: even number technical} implies instability of the controlled system. 
	\medskip

		If the gain matrix $K$ satisfies the conditions \eqref{eq: commute B}--\eqref{eq: commute P}, then in particular $K A(t) = A(t) K$ for all $t$, i.e. the gain matrix commutes with the linear ODE. Under this weaker assumption, a different proof yields the same any-number limitation, see the upcoming work \cite{deWolff21} by the first author.

	As an application of Proposition \ref{prop: even number technical}, we consider the case $K = k I$ with $k \in \mathbb{R}$. Then the conditions \eqref{eq: commute P}--\eqref{eq: commute B} are trivially satisfied. Therefore Proposition \ref{prop: even number technical} leads to the following corollary: 
	
	\begin{cor}[Any-number limitation for scalar gain matrices] \label{cor:any number}
		Consider the system \eqref{eq: time periodic ode} satisfying Assumption \ref{assumption}. Suppose that the linearized equation \eqref{eq: linvareq ode 1} has at least one real Floquet multiplier larger than $1$. 
		
		Then, for every $k \in \mathbb{R}$, $x_\ast$ is unstable as a solution of the controlled system \begin{equation} \label{eq: scalar control}
			\dot{x}(t) = f(x(t), t) + k \left[x(t) - x(t-T)\right]. 
		\end{equation}
	\end{cor}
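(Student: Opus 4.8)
The plan is to obtain this corollary immediately from Proposition \ref{prop: even number technical} by specializing the gain matrix to $K = kI$. First I would set $A(t) := \partial_x f(x_\ast(t), t)$, so that the linearized equation \eqref{eq: linvareq ode 1} is exactly $\dot{y}(t) = A(t)y(t)$; by the periodicity part of Assumption \ref{assumption} the map $t \mapsto A(t)$ is $T$-periodic. By hypothesis this system possesses at least one real Floquet multiplier larger than $1$, so the Floquet-theoretic assumption of Proposition \ref{prop: even number technical} is in force.

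Next I would check the conditions on the gain. With $K = kI$, $k \in \mathbb{R}$, one has $\sigma(K) = \{k\} \subseteq \mathbb{R}$, and since a scalar multiple of the identity commutes with every matrix, both \eqref{eq: commute P} and \eqref{eq: commute B} hold trivially: $kI\,P(t) = P(t)\,kI$ for all $t$ and $kI\,B = B\,kI$, whatever the Floquet data $P(t)$ and $B$ of the uncontrolled system happen to be. Thus every hypothesis of Proposition \ref{prop: even number technical} is satisfied.

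Proposition \ref{prop: even number technical} then yields that the controlled linear equation $\dot{y}(t) = A(t)y(t) + k\left[y(t) - y(t-T)\right]$ has at least one real Floquet multiplier strictly larger than $1$. Since this equation is precisely the linearization of \eqref{eq: scalar control} about the periodic solution $x_\ast$, the monodromy operator of \eqref{eq: scalar control} has a spectral point outside the unit circle, and hence $x_\ast$ is unstable as a solution of \eqref{eq: scalar control}, exactly as in the conclusion of Corollary \ref{thm: nakajima}.

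There is no genuine obstacle in this argument: all the substantive work has already been carried out in Proposition \ref{prop: even number technical} (and, upstream of it, in Proposition \ref{prop: odd number technical} and Theorem \ref{lem: preservation}). The only point worth stating explicitly is that a scalar gain automatically meets the commutativity requirements \eqref{eq: commute P}--\eqref{eq: commute B}, which is immediate; the corollary is therefore a direct specialization rather than a new result.
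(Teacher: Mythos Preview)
Your proposal is correct and matches the paper's approach exactly: the paper likewise derives this corollary by specializing Proposition~\ref{prop: even number technical} to $K = kI$, noting that the commutativity conditions \eqref{eq: commute P}--\eqref{eq: commute B} (and the real-spectrum requirement) are trivially satisfied for a scalar multiple of the identity. Your write-up simply spells out a few details the paper leaves implicit.
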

	
	\subsection{Corollary: Any-number limitation for commuting gain matrices with complex spectrum} \label{sec:complex periodic} 
	
	In this section, we address more restrictions on the choice of gain matrix. We again consider commutative gain matrices, but in contrast to the results in Section \ref{sec:real periodic}, we do not make any assumptions on the spectrum of the matrix $K$. The main point here is that the limitation on control similar to the formulation in Corollary \ref{prop: even number technical} still holds, but the reasoning behind the limitation is different: the result in this section is no longer a directly corollary of the odd-number limitation, but requires an explicit analysis of the relevant Floquet multipliers.

	\begin{prop} \label{prop:complex gain periodic}
		Consider the linear, non-autonomous system 
		\begin{equation} \label{eq: A 3}
			\dot{y}(t) = A(t) y(t)
		\end{equation}
		and assume that there exists $T > 0$ such that $A(t+T) = A(t)$ for all $t \in \mathbb{R}$. Assume that system \eqref{eq: A 3} has at least one real Floquet multiplier larger than $1$. 
		
		Let $P(t), \ B$ be as in \eqref{eq: B defn}--\eqref{eq: P defn} and assume that the gain matrix $K \in \mathbb{R}^{N \times N}$ satisfies 
		\begin{subequations}
			\begin{align}
				K P(t) = & \ P(t) K \quad\mbox{ for all }t \in \mathbb{R};\label{eq: commute P 2} \\
				K B = & \ B K. \label{eq: commute B 2}
			\end{align}
		\end{subequations}
		Then the system
		\begin{equation} \label{eq: A control 3}
			\dot{y}(t) = A(t) y(t) + K \left[y(t) - y(t-T) \right]
		\end{equation}
		has at least one real Floquet multiplier outside the unit circle.
	\end{prop}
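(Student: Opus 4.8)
The plan is to follow the three‑step structure of the proof of Proposition~\ref{prop: even number technical}, keeping Steps~1 and~2 essentially verbatim and replacing the odd‑number argument used in its Step~3 by a direct analysis of the characteristic equation of a scalar, complex‑valued delay equation.

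\emph{Steps 1 and 2 (transformation and common eigenvector).} Exactly as in Proposition~\ref{prop: even number technical}: the time‑periodic change of coordinates $y(t)=P(t)v(t)$ together with \eqref{eq: commute P 2} transforms \eqref{eq: A control 3} into the autonomous DDE $\dot v(t)=Bv(t)+K[v(t)-v(t-T)]$ without altering the Floquet multipliers, and \eqref{eq: commute B 2} shows that the eigenspace $\mathcal{N}(\lambda_\ast I-B)$ — where $\lambda_\ast>0$ is chosen with $e^{\lambda_\ast T}=\mu_\ast>1$ for a real Floquet multiplier $\mu_\ast$ of \eqref{eq: A 3} — is invariant under the real matrix $K$. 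If $K$ has a real eigenvalue on this subspace, Proposition~\ref{prop: even number technical} already gives the conclusion; otherwise we fix a simultaneous eigenvector $y_\ast$ with $By_\ast=\lambda_\ast y_\ast$ and $Ky_\ast=k_\ast y_\ast$, $\mathrm{Im}(k_\ast)\neq 0$, and note that $\bar k_\ast,\bar y_\ast$ is then a second eigenpair, since $K$ is real.

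\emph{Step 3 (scalar reduction and explicit analysis).} Putting $v(t)=w(t)y_\ast$ reduces the autonomous DDE to the scalar, complex‑valued equation $\dot w(t)=\lambda_\ast w(t)+k_\ast[w(t)-w(t-T)]$, and each of its Floquet multipliers $\mu$ — together with $\bar\mu$, via $\bar y_\ast$ — is a Floquet multiplier of \eqref{eq: A control 3} through $y(t)=P(t)w(t)y_\ast$. Solving on $[0,T]$ with $w(t-T)=\mu^{-1}w(t)$ shows that $\mu\neq 0$ is such a multiplier precisely when
\[
\mu=\exp\big(T[\lambda_\ast+k_\ast(1-\mu^{-1})]\big).
\]
I would then homotope the gain, $k_\ast\rightsquigarrow\sigma k_\ast$ with $\sigma\in[0,1]$: at $\sigma=0$ the unique nonzero Floquet multiplier is $e^{\lambda_\ast T}=\mu_\ast>1$. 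Exactly as in the proof of Proposition~\ref{prop: odd number technical}, the monodromy operators are compact with operator norm bounded uniformly in $\sigma$, so no multiplier arrives from, or escapes to, infinity; hence the number of Floquet multipliers outside the unit circle changes only by crossings of the unit circle, and by the characteristic equation a crossing at $\mu=e^{i\psi}$ obeys $i(\psi-2\pi m)/T=\lambda_\ast+\sigma k_\ast(1-e^{-i\psi})$ for some $m\in\mathbb{Z}$. Splitting this relation into its real and imaginary parts gives an explicit system in $(\sigma,\psi)$; analyzing it along the homotopy — controlling at which angles crossings may occur and the signed count of multipliers leaving versus entering the open unit disk — should produce at $\sigma=1$ the claimed real Floquet multiplier outside the unit circle.

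\emph{Main obstacle.} Unlike in the odd‑number setting there is no parity obstruction: a lone Floquet multiplier of the \emph{complex} scalar equation can cross the unit circle unaccompanied, so the conclusion cannot be read off abstractly and must be wrung out of the explicit crossing relation above. The delicate parts are (i) to rule out the count of multipliers outside the unit circle dropping to zero along the homotopy, and (ii) to ensure the surviving multiplier can be taken on the real axis; it is here that the identity ``delay $=$ period'' is genuinely used, since it forces the same angle $\psi$ to appear in the exponent and in $e^{i\psi}$ in the crossing relation.
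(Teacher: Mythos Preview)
Your Steps 1--3 coincide with the paper's: the same Floquet transformation, the same common-eigenvector argument, and the same reduction to the scalar complex DDE with characteristic equation $\lambda=\lambda_\ast+k_\ast(1-e^{-\lambda T})$ (equivalent to your $\mu$-form under $\mu=e^{\lambda T}$). Your homotopy $\sigma\mapsto\sigma k_\ast$ is the path the paper follows as well.

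The gap you flag as your ``Main obstacle'' is where the paper supplies the missing idea. Rather than analysing the crossing relation as an implicit system in $(\sigma,\psi)$, the paper \emph{inverts} it and solves for the gain: setting $\lambda=i\omega$ yields the explicit family of Hopf curves
\[
k_\ast(\omega)=\frac{i\omega-\lambda_\ast}{1-e^{-i\omega T}},\qquad \omega\in\bigl(\tfrac{2\pi m}{T},\tfrac{2\pi(m+1)}{T}\bigr),\ m\in\mathbb{Z},
\]
in the complex $k_\ast$-plane. The paper then argues that each branch is a graph over $\mathrm{Re}\,k_\ast$, and uses that $\lambda\mapsto k_\ast(\lambda)$ is holomorphic (hence orientation-preserving) to determine on which side of each curve the unstable count is larger; from the geometry of these curves relative to the origin it concludes that the count, which equals~$1$ at $k_\ast=0$, never drops to zero. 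This orientation-based bookkeeping is precisely what replaces the unavailable parity argument and addresses your point~(i). In your variables the same move is to solve your crossing relation for $\sigma k_\ast$ rather than for $\psi$.

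On your point~(ii): you are right to be suspicious. The paper's Step~5 establishes only that the controlled system has a Floquet multiplier \emph{outside the unit circle}; it does not show the multiplier is real when $k_\ast\notin\mathbb{R}$, and indeed Figure~\ref{circle} displays non-real unstable multipliers for complex gains. The word ``real'' in the proposition's conclusion appears to be an overstatement; the subsequent corollaries use only the weaker conclusion.
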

	\begin{proof}
		We divide the proof into five steps. Since the first two steps are identical to the first two steps in the proof of Proposition \ref{prop: even number technical}, we will not repeat them here. 
		
		\emph{Step 1:} Let $y(t)$ be a solution of \eqref{eq: A control 3} and let $P(t),\ B$ be as in \eqref{eq: B defn}--\eqref{eq: P defn}. Then the periodic coordinate transformation $y(t) =P(t) v(t)$ transforms solutions of \eqref{eq: A control 3} into solutions of 
		\begin{align}
			\dot{v}(t) =B v(t) + K \left[v(t) - v(t-T)\right].
		\end{align}
		
		\emph{Step 2}: Since by assumption $Y_0(T) = e^{BT}$ has an eigenvalue $\mu_\ast > 1$, we can choose $B$ such that $B$ has an eigenvalue $\lambda_\ast > 0$. Since by assumption \eqref{eq: commute B 2} the matrices $B$ and $K$ commute, the space 
		\begin{equation}
			\mathcal{N}  \left( \lambda_\ast I - B \right) : = \{y \in \mathbb{C}^N \mid (\lambda_\ast I - B) y = 0 \}
		\end{equation} 
		is invariant under $K$. Therefore we can find a non-zero $y_\ast \in \mathcal{N}  \left( \lambda_\ast I - B \right)$ and a (possibly complex!) $k_\ast \in \mathbb{C}$ such that 
		\begin{equation}
			B y_\ast = \lambda_\ast y_\ast, \quad K y_\ast = k_\ast y_
			\ast
		\end{equation}
		i.e., $y_\ast$ is a simultaneous eigenvector for $B$ and $K$.

		\emph{Step 3:} We now consider the reduced, scalar-valued (possibly complex!) DDE
		\begin{equation} \label{eq: scalar dde 2}
			\dot{w}(t) = \lambda_\ast w(t) + k_\ast \left[w(t) - w(t-T)\right].
		\end{equation}
		The DDE \eqref{eq: scalar dde 2} has a solution of the form $w(t) = e^{\lambda t}$ if and only if $\lambda \in \mathbb{C}$ satsifies
		\begin{equation} \label{eq:ce complex}
			\lambda = \lambda_\ast + k_\ast \left(1-e^{-\lambda T} \right). 
		\end{equation}
		If $\lambda$ satisfies \eqref{eq:ce complex}, then $y(t) = P(t) e^{\lambda t} y_\ast$ is a solution of \eqref{eq: A control 3} with $y(t+T) = e^{\lambda T} y(t)$. Therefore, to prove that system \eqref{eq: A control 3} has a Floquet multiplier outside the unit circle, it suffices to prove that equation \eqref{eq:ce complex} has a solution in the right half of the complex plane. To do so, we distinguish between the case where $k_\ast \in \mathbb{R}$ (Step 4) and the case where $k_\ast \in \mathbb{C} \backslash \mathbb{R}$ (Step 5). 
		
		\emph{Step 4:} For $k_\ast = 0$, equation \eqref{eq: scalar dde 2} becomes
		\begin{equation} \dot{w}(t) = \lambda_\ast w(t) 
		\end{equation}
		which has one Floquet multiplier $e^{\lambda_\ast T} >1$. If we assume that $k_\ast \in \mathbb{R}$, Proposition \ref{prop: even number technical} on real gain matrices implies that \eqref{eq: scalar dde 2} has a Floquet multiplier larger than one for all gains $k_\ast$. This proves the theorem in the case that $k_\ast \in \mathbb{R}$. 
		
		\emph{Step 5:} We now consider the case $k_\ast \in \mathbb{C} \backslash \mathbb{R}$. If $\lambda \in \mathbb{C}$ satisfies \eqref{eq:ce complex}, then $\overline{\lambda}$ satisfies 
		\begin{equation}
			\overline{\lambda} = \lambda_\ast + \overline{k}_\ast \left(1 - e^{- \overline{\lambda} T} \right)
		\end{equation}
		and both $e^{\lambda T}$ and $e^{\overline{\lambda}T}$ are Floquet multipliers of \eqref{eq: A control 3}. Hence, for non-real $k_\ast$, non-real solutions of \eqref{eq:ce complex} in the right half of the complex plane lead to \emph{two} unstable Floquet multipliers of \eqref{eq: A control 3}, see also Figure 1. 
		
		If $\lambda$ is a solution of \eqref{eq:ce complex} on the imaginary axis, then $e^{\lambda T}$ is a Floquet multiplier of \eqref{eq: A control 3} on the unit circle. So to search for stability changes of \eqref{eq: A control 3}, we search for solutions of \eqref{eq:ce complex} on the imaginary axis. Equation \eqref{eq:ce complex} has a solution of the form $\lambda = i \omega$ if and only if $k_\ast$ is on the curve
		\begin{equation}\label{Hopfcurves}
			k_\ast(\omega)=\frac{i \omega-\lambda_\ast}{1-e^{-i \omega T}}.
		\end{equation}
		
		\begin{figure} \label{fig:hopfcurves}
			\includegraphics[width=\columnwidth]{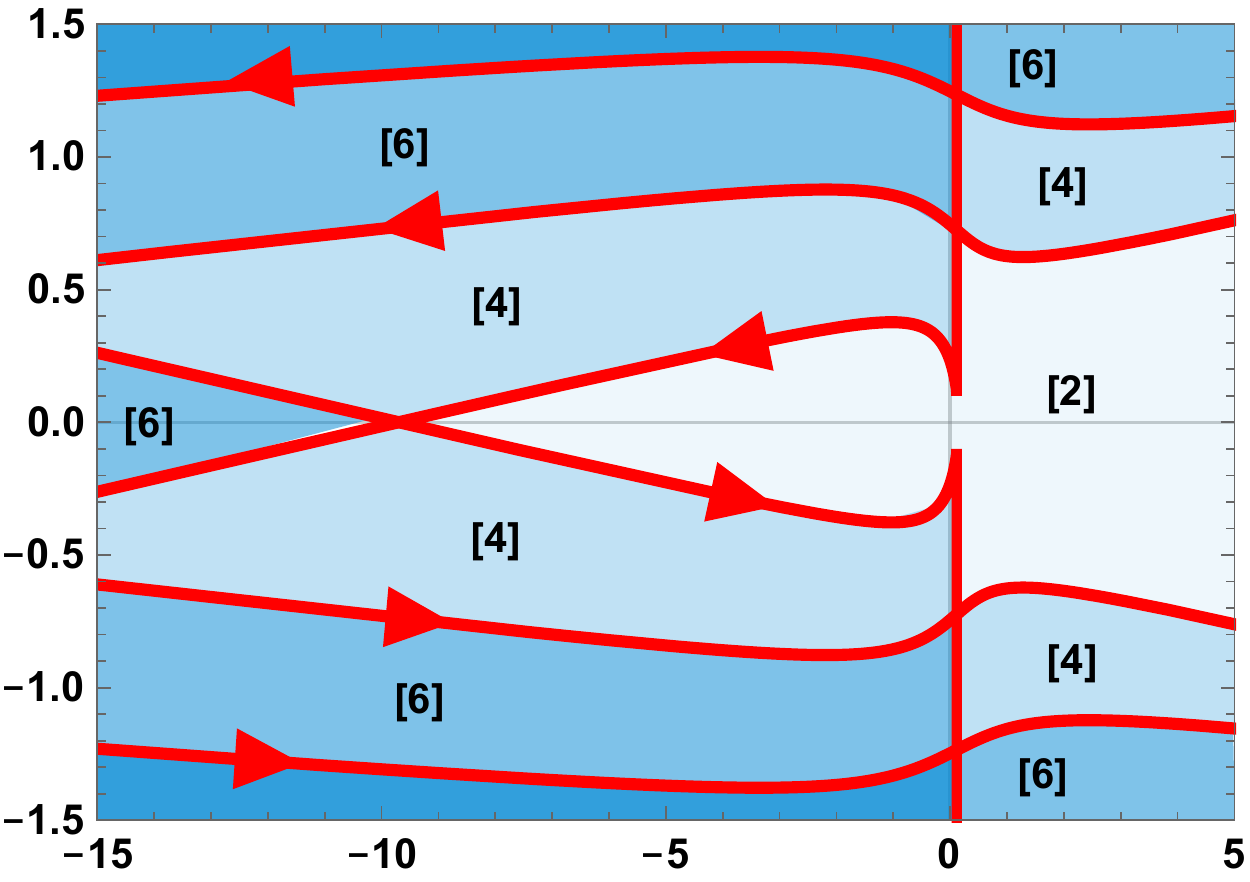}
			\caption{\label{Hopfcurvesa005} Hopf curves (red) from eq. \eqref{Hopfcurves}, horizontal axis: Re$(k_\ast)$, vertical axis: Im$(k_\ast)$. Increasing $\omega$ is indicated by arrows. The unstable dimension is given in brackets and is higher by two to the right of the curves. No control is possible. Parameter values: $\lambda_\ast=0.05, T= 2 \pi$.}
		\end{figure}
		\begin{figure}
			\includegraphics[width=\columnwidth]{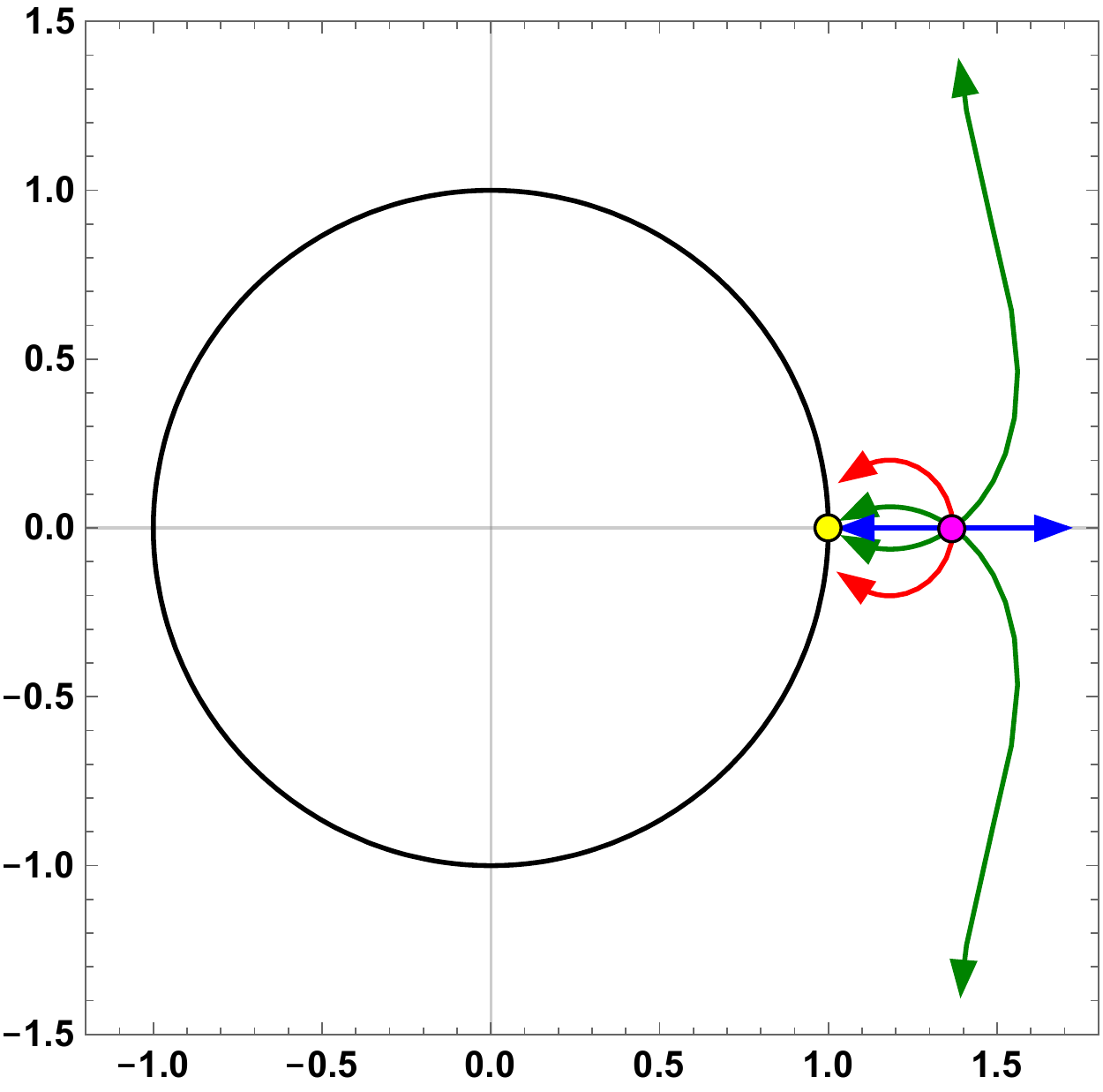}
			\caption{\label{circle} Unit circle and Floquet multipliers in the complex plane parametrized by different real and complex $k_\ast$. Yellow dot: 
				Geometrically invariant determining center. Parameter values: $\lambda_\ast=0.05$ (magenta dot), $T= 2 \pi$. Blue: real $k_\ast$ (note the invariance of the real line!), red (dotted): purely imaginary $k_\ast$, green: arg$(k_\ast)=e^{0.8\pi i}$.} 
		\end{figure}
		Note how we can write these curves in one single equation but actually obtain a family of curves through the exponential term in the denominator.
		More precisely, there is one curve for each $\omega \in (2 \pi m/T,2 \pi (m+1)/T)$, $m \in \mathbb{Z}$. 
		Each of these curves defines a continuous  graph over the real axis:
		Indeed, the real part is strictly increasing along the curves. 
		Moreover, we remark that \eqref{Hopfcurves} is symmetric with respect to the real axis. 
		It suffices therefore to consider $\omega>0$, w.l.o.g.
		Next, notice that the curves, parametrized by $\omega$, are oriented in the direction of decreasing real part of $k_\ast$. 
		Moreover, since all the segments are complex differentiable (see also \cite{SCH16}), complex orientation is preserved and we conclude the number of solutions in the right half plane is higher by 2 to the right of the curve.
		Therefore, the unstable dimension can only increase if any of the curves \eqref{Hopfcurves} are crossed, proving the theorem.
	\end{proof}

		Note that in the above proof, in contrast to Proposition \ref{prop: even number technical}, the real line does not stay invariant; see Figure \ref{circle}. 
	In Section \ref{sec:complex equilibria}, we will give an application of Proposition \ref{prop: any number eq} to the stabilization of equilibria.
	
	\section{Geometric invariance of resonating centers of equilibria}
	In this section, we focus on Pyragas control of equilibria. The main result concerns invariance of the geometric multiplicity not only of the determining center 0, but also of the resonating centers $\pm 2 \pi i n/ T$ (Theorem \ref{lem: preservation eq}). 
	From this invariance we deduce several limitations on feedback stabilization. These limitations explicitly depend on the time delay. 
	If the time delay resonates with the eigenvalues, stabilization becomes impossible. We expect these results to be particularly important at (equivariant) Hopf bifurcation, where the time delay $T$ is in resonance with the purely imaginary eigenvalues $\pm 2 \pi i / T$  of the equilibrium.

	\subsection{Main result concerning equilibria}
	Throughout this section, consider the autonomous ODE 
	\begin{align} \label{eq: autonomous ode}
		\dot{x}(t) = f(x(t))
	\end{align}
	with $f: \mathbb{R}^N \to \mathbb{R}^N$ a $C^1$-function. We assume that there exists a $x_\ast \in \mathbb{R}^N$ such that $f(x_\ast) = 0$, i.e., $x_\ast$ is an equilibrium of \eqref{eq: autonomous ode}. 
	
	The linearization 
	\begin{equation} \label{eq: linear ode eq}
		\dot{y}(t) = f'(x_\ast) y(t)
	\end{equation}
	has an eigenvalue $\lambda$ (or, more precisely, the generator of the semigroup associated to \eqref{eq: linear ode eq} has an eigenvalue $\lambda$) if and only 
	\begin{equation} \label{eq: ce ode 1}
		\det \left( \lambda I - f'(x_\ast) \right) = 0.
	\end{equation}
	The geometric multiplicity of the eigenvalue $\lambda$ is the dimension of the linear space
	\begin{equation}
		\begin{aligned}
			\mathcal{N}\left( \lambda I - f'(x_\ast) \right) = \{x \in \mathbb{C}^N \mid  \lambda x - f'(x_\ast) x = 0 \}.
		\end{aligned}
	\end{equation}
	and its algebraic multiplicity is given by the order of $\lambda$ as a zero of the function $z \mapsto \det \left(z I - f'(x_\ast) \right)$. 
	
	For a fixed time delay $T > 0$, the geometric multiplicity of the eigenvalues $\frac{2 \pi n i}{T}$ of \eqref{eq: linear ode eq} plays an important role in the stabilization results in the rest of this section. To facilitate this in notation, and to emphasize the connection to the time delay, we define the \textbf{resonating centers} to be the geometric eigenspace of the eigenvalues $\frac{2 \pi ni }{T}$, i.e., the space
	\begin{equation}
		\mathcal{N}\left( \frac{2 \pi n i}{T}I - f'(x_\ast) \right).
	\end{equation}
	
	For $T > 0$, $x_\ast$ is again an equilibrium of the controlled system
	\begin{equation} \label{eq: pyragas fixed point}
		\dot{x}(t) = f(x(t)) + K \left[x(t) - x(t-T)\right]
	\end{equation}
	with gain matrix $K \in \mathbb{R}^{N \times N}$.
	Suprisingly, we can find the relevant eigenvalues from a finite-dimensional characteristic equation. Indeed, $\lambda$ is an eigenvalue of the linearization
	\begin{equation} \label{eq: linear dde eq}
		\dot{y}(t) = f'(x_\ast) y(t) + K \left[y(t) -y(t-T)\right],
	\end{equation}
	(or, more precisely, of the generator of the semigroup associated to \eqref{eq: linear dde eq}) if and only if $\lambda$ satisfies
	\begin{equation}
		\det \left( \lambda I - f'(x_0) - K \left[1-e^{-\lambda T}\right] \right) = 0.
	\end{equation}
	The geometric multiplicity of $\lambda$ equals the dimension of the linear space
	\begin{equation}
		\begin{aligned}
			\mathcal{N}\left( \lambda I - f'(x_\ast) - K \left[1-e^{-\lambda T}\right] \right) \subseteq \mathbb{C}^N
		\end{aligned}
	\end{equation}
	and its algebraic multiplicity is the order of $\lambda$ as a zero of the function 
	\begin{equation} \label{eq: ce dde 2}
		z \mapsto \det \left( z I  - f'(x_\ast) - K \left[1-e^{-z T} \right] \right);
	\end{equation}
	see also \cite[Chapter IV]{DIE95}. 
	
	\medskip
	
	In the following theorem, we compare the geometric multiplicity of the resonating eigenvalues of \eqref{eq: linear ode eq} with the geometric multiplicity of the resonating eigenvalues of \eqref{eq: linear dde eq}. By convention, if $\lambda \in \mathbb{C}$ is \emph{not} an eigenvalue of \eqref{eq: linear ode eq} (resp. \eqref{eq: linear dde eq}), we say that the geometric multiplicity of the eigenvalue $\lambda$ of \eqref{eq: linear ode eq} (resp. \eqref{eq: linear dde eq}) is zero.
	
	\begin{theorem}[Geometric invariance of resonating centers] \label{lem: preservation eq}
		For each $n \in \mathbb{Z}$, the geometric multiplicity of the eigenvalue $\frac{2 \pi i n}{T}$ is preserved under control of Pyragas type. That is, for $K \in \mathbb{R}^{N \times N}$ and for every $n \in \mathbb{Z}$, the geometric multiplicity of $\frac{2 \pi n i}{T}$ as an eigenvalue of the ODE
		\begin{equation} \label{eq: linear ode eq 1}
			\dot{y}(t) = f'(x_\ast) y(t)
		\end{equation}
		equals the geometric multiplicity of $\frac{2 \pi n i}{T}$ as an eigenvalue of the DDE
		\begin{equation} \label{eq: linear dde eq 1}
			\dot{y}(t) = f'(x_\ast) y(t) + K \left[y(t) -y(t-T)\right].
		\end{equation}
	\end{theorem}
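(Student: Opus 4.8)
The plan is to reduce the statement to the finite-dimensional characteristic matrix function already recorded above, and then to exploit the resonance identity $e^{-\lambda T}=1$ at $\lambda=\tfrac{2\pi i n}{T}$, which makes the delayed term invisible to the eigenvalue problem. So the proof will be short and, as in Theorem \ref{lem: preservation}, essentially a bookkeeping statement once the right framework is in place.

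First I would recall the spectral characterization stated in the excerpt (see \cite[Chapter IV]{DIE95}): $\lambda\in\mathbb{C}$ is an eigenvalue of the generator of the semigroup associated to the controlled linearization \eqref{eq: linear dde eq} if and only if $\det\Delta_K(\lambda)=0$, where $\Delta_K(\lambda):=\lambda I-f'(x_\ast)-K\bigl[1-e^{-\lambda T}\bigr]$, and in that case its geometric multiplicity equals $\dim\mathcal{N}\bigl(\Delta_K(\lambda)\bigr)$. Likewise, $\lambda$ is an eigenvalue of the uncontrolled linearization \eqref{eq: linear ode eq} iff $\det\Delta_0(\lambda)=0$ with $\Delta_0(\lambda):=\lambda I-f'(x_\ast)$, its geometric multiplicity being $\dim\mathcal{N}\bigl(\Delta_0(\lambda)\bigr)$. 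By the stated convention both multiplicities are $0$ when $\lambda$ fails to be an eigenvalue.

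The key step is then the elementary observation that at a resonating value $\lambda=\tfrac{2\pi i n}{T}$, $n\in\mathbb{Z}$, one has $e^{-\lambda T}=e^{-2\pi i n}=1$, hence $1-e^{-\lambda T}=0$, and therefore $\Delta_K\!\left(\tfrac{2\pi i n}{T}\right)=\Delta_0\!\left(\tfrac{2\pi i n}{T}\right)$ \emph{as matrices}, for every gain $K\in\mathbb{R}^{N\times N}$. Consequently $\mathcal{N}\bigl(\Delta_K(\tfrac{2\pi i n}{T})\bigr)=\mathcal{N}\bigl(\Delta_0(\tfrac{2\pi i n}{T})\bigr)$, and the two geometric multiplicities coincide. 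Equivalently, in the more geometric language of Theorem \ref{lem: preservation}: an eigenfunction of the generator to $\lambda$ has the form $\theta\mapsto e^{\lambda\theta}v$ on $[-T,0]$ with $v\in\mathcal{N}\bigl(\Delta_K(\lambda)\bigr)$; for $\lambda=\tfrac{2\pi i n}{T}$ the associated solution $y(t)=e^{\lambda t}v$ of \eqref{eq: linear dde eq} satisfies $y(t-T)=e^{-\lambda T}y(t)=y(t)$, so the Pyragas term $K\left[y(t)-y(t-T)\right]$ vanishes identically and $y$ solves \eqref{eq: linear dde eq} if and only if it solves \eqref{eq: linear ode eq}, giving the one-to-one correspondence of eigenvectors directly.

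There is essentially no analytic obstacle here; the only point requiring genuine care is the appeal to the spectral theory of delay equations identifying the geometric multiplicity of an eigenvalue of the generator with $\dim\mathcal{N}\bigl(\Delta_K(\lambda)\bigr)$ (in particular that the eigenfunctions are exponentials), which is why one cites \cite{DIE95}. It is worth stressing in the write-up what distinguishes this theorem from Theorem \ref{lem: preservation}: there, only the single multiplier $1$ is protected, since only constant histories annihilate the delayed feedback; here the entire lattice $\{\tfrac{2\pi i n}{T}:n\in\mathbb{Z}\}$ is protected, precisely because at these values a complex-exponential history with period dividing $T$ renders the delayed feedback noninvasive.
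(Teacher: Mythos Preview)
Your proof is correct and follows essentially the same approach as the paper: both use the characteristic matrix function $\Delta_K(\lambda)=\lambda I-f'(x_\ast)-K\bigl[1-e^{-\lambda T}\bigr]$ and observe that at $\lambda=\tfrac{2\pi i n}{T}$ the resonance identity $e^{-\lambda T}=1$ forces $\Delta_K=\Delta_0$, so the kernels (and hence geometric multiplicities) coincide. Your additional remarks on the exponential form of the eigenfunctions and the comparison with Theorem~\ref{lem: preservation} are sound elaborations but not needed for the core argument.
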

	\begin{proof}
		Fix $n \in \mathbb{Z}$. The geometric multiplicity of $\frac{2 \pi i n}{T}$ as an eigenvalue of \eqref{eq: linear ode eq 1} is given by
		\begin{equation} \dim \mathcal{N} \left( \frac{2 \pi i n}{T} I - f'(x_\ast) \right). \end{equation}
		Conversely, the geometric multiplicity of $\frac{2 \pi i n}{T}$ as an eigenvalue of \eqref{eq: linear dde eq 1} is given by
		\begin{align}
			\dim  \mathcal{N} \left( \frac{2 \pi i n}{T} I - f'(x_\ast) - K \left[1 - e^{-\frac{2 \pi i n}{T} T} \right] \right) \\
			= \dim \mathcal{N} \left( \frac{2 \pi i n}{T} I - f'(x_\ast) \right).
		\end{align}
		Note that the delay and the resonant eigenvalue cancel in the exponent, causing the contribution of Pyragas control to vanish.
		We conclude that the geometric multiplicity of $\frac{2 \pi i n}{T}$ as an eigenvalue of \eqref{eq: linear ode eq 1} equals the geometric multiplicity of $\frac{2 \pi i n}{T}$ as an eigenvalue of \eqref{eq: linear dde eq 1}. 
	\end{proof}
	
	We briefly reflect on the connection between the two main results on determining and resonating centers. We can interpret the equilibrium $x_\ast$ of \eqref{eq: autonomous ode} as a periodic solution of arbitrary period $T>0$.
	Theorem \ref{lem: preservation eq} then implies Theorem \ref{lem: preservation} for equilibria of autonomous systems, but Theorem \ref{lem: preservation eq} on resonating centers is \emph{finer} in the following sense: If we view
	\begin{equation} \label{eq:comparison}
		\dot{y}(t) = f'(x_\ast) y(t)
	\end{equation}
	as a linear, time-periodic equation with trivial time-dependence, its monodromy operator is given by
	\begin{equation}
		Y_0(T) = e^{f'(x_\ast) T}. 
	\end{equation} 
	Therefore, $1 \in \sigma(Y_0(T))$, i.e., 1 is a Floquet multiplier,  if and only if there is (at least one) $n \in \mathbb{Z}$ such that $\frac{2 \pi i n}{T}$ as an eigenvalue of \eqref{eq:comparison}. 
	In this case the geometric multiplicity of $1 \in \sigma(Y_0(T))$ is given by
	\begin{equation}
		\sum_{n \in \mathbb{Z}} \dim \mathcal{N} \left( \frac{2 \pi i n}{T} I - f'(x_\ast) \right).  
	\end{equation} 
	Note that all the resonating eigenvalues $\frac{2 \pi n i}{T}$ for the equilibrium together form the determining Floquet multiplier $1$ for the periodic orbit. Theorem \ref{lem: preservation} only gives information on the geometric multiplicity of the determining Floquet multiplier, thereby losing information on the geometric multiplicity of the individual resonating eigenvalues. 
	
	At equivariant bifurcation points, we can expect a higher geometric multiplicity of a resonating center. Note that the invariance principle in Theorem \ref{lem: preservation eq} in this case forbids \emph{asymptotic} stabilization.

	\subsection{Corollary: Odd-number limitation for equilibria}
	
	In this section, we use Theorem \ref{lem: preservation eq} to prove limitations for stabilization of equilibria of autonomous systems. 
	Again, the absence of an determining center forbids stabilization. To facilitate this essential point in notation, we say that the equilibrium $x_\ast$ of the ODE $\dot{x}(t) =f(x(t))$ is \emph{non-degenerate} if $0$ is not an eigenvalue of the Jacobian $f'(x_\ast)$.
	
	The following proposition gives an odd-number limitation for equilibria, analogous to the odd-number limitation for periodic orbits in Corollary \ref{thm: nakajima}.
	
	\begin{cor}[Odd-number limitation for equilibria] \label{prop: odd number equilibrium}
		Consider the system
		\begin{equation} \label{eq: odd number eq}
			\dot{x}(t) = f(x(t)), \qquad t \geq 0
		\end{equation}
		with $f: \mathbb{R}^N \to \mathbb{R}^N$ a $C^1$-function. Suppose that $x_\ast \in \mathbb{R}^N$ is an non-degenerate equilibrium of \eqref{eq: odd number eq}. Moreover, assume that $f'(x_\ast)$ has an odd number of eigenvalues (counting algebraic multiplicities) in the strict right half plane.
		
		Then, for all $K \in \mathbb{R}^{N \times N}$ and all $T > 0$, $x_\ast$ is unstable as a solution of the controlled system
		\begin{equation} \label{eq: odd number eq control}
			\dot{x}(t) = f(x(t)) + K \left[x(t) - x(t-T)\right]. 
		\end{equation}
	\end{cor}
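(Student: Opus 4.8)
The plan is to imitate the homotopy-and-parity argument from the proof of Proposition~\ref{prop: odd number technical}, but carried out directly on the explicit finite-dimensional characteristic function of the controlled linearization, with Theorem~\ref{lem: preservation eq} (the case $n=0$) supplying the one ingredient that blocks a parity change. Fix $K\in\mathbb{R}^{N\times N}$ and $T>0$, and introduce the homotopy parameter $\alpha\in[0,1]$ in
\begin{equation*}
	\dot y(t) = f'(x_\ast)\,y(t) + \alpha K\left[y(t)-y(t-T)\right],
\end{equation*}
so that $\lambda$ is an eigenvalue of this linearization exactly when $h_\alpha(\lambda):=\det\!\bigl(\lambda I - f'(x_\ast) - \alpha K[1-e^{-\lambda T}]\bigr)=0$. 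First I would establish a uniform a~priori bound: if $h_\alpha(\lambda)=0$ then $\lambda\in\sigma\bigl(f'(x_\ast)+\alpha K[1-e^{-\lambda T}]\bigr)$, hence $|\lambda|\le\norm{f'(x_\ast)}+\alpha\norm{K}\bigl(1+e^{-T\re\lambda}\bigr)$; for $\re\lambda\ge 0$ and $\alpha\le 1$ this yields $|\lambda|\le R_0:=\norm{f'(x_\ast)}+2\norm{K}$. Thus every eigenvalue in the closed right half-plane lies in the fixed disk $\{|\lambda|\le R_0\}$, and no eigenvalue ever lies on the arc $\{|\lambda|=R_0+1,\ \re\lambda\ge 0\}$, for any $\alpha\in[0,1]$; this is the analogue, for equilibria, of the ``no multipliers from infinity'' estimate in Proposition~\ref{prop: odd number technical}.

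Next, let $m_\alpha$ be the number of zeros of $h_\alpha$, counted with multiplicity, in the bounded region $\Omega:=\{\re\lambda>0\}\cap\{|\lambda|<R_0+1\}$. Since $h_\alpha$ is entire in $\lambda$ and jointly continuous in $(\alpha,\lambda)$, the argument principle on $\partial\Omega$ shows that $m_\alpha$ is a finite nonnegative integer which can only change when a zero crosses $\partial\Omega$, and by the a~priori bound such a crossing can occur only on the imaginary segment $\{\re\lambda=0,\ |\im\lambda|\le R_0+1\}$. Because $f'(x_\ast)$ and $K$ are real, $\overline{h_\alpha(\lambda)}=h_\alpha(\overline\lambda)$, so the non-real zeros of $h_\alpha$ come in complex conjugate pairs; a crossing at a pair $\pm i\omega$ with $\omega\ne 0$ therefore changes $m_\alpha$ by an even number, and the only crossing that can change the \emph{parity} of $m_\alpha$ is one at $\lambda=0$. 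But $1-e^{-0\cdot T}=0$, so $h_\alpha(0)=\det(-f'(x_\ast))$, which is nonzero for every $\alpha$ precisely because $x_\ast$ is non-degenerate; equivalently, this is Theorem~\ref{lem: preservation eq} with $n=0$, stating that the geometric multiplicity of the eigenvalue $0$ (here, zero) is preserved under control. Hence $m_\alpha \bmod 2$ is constant on $[0,1]$.

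Finally, at $\alpha=0$ we have $m_0=\#\{\text{eigenvalues of }f'(x_\ast)\text{ with }\re\lambda>0\}$, which is odd by hypothesis; by the previous step $m_1$ is odd as well, so $m_1\ge 1$ and the controlled linearization \eqref{eq: linear dde eq} possesses an eigenvalue with strictly positive real part. The principle of linearized instability for delay equations (cf.~\cite{DIE95}) then gives that $x_\ast$ is unstable as a solution of \eqref{eq: odd number eq control}. The only genuinely delicate point is the book-keeping in the middle step --- that the zero count is finite, locally constant, and cannot change ``from infinity'' --- but this is handled entirely by the uniform bound together with the argument principle, so no new analytic difficulty arises beyond what is already present in Proposition~\ref{prop: odd number technical}; the conceptual content is again just the triviality that the Pyragas term vanishes identically at the determining center.
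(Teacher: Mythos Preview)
Your proof is correct and follows essentially the same approach as the paper's: both introduce the homotopy $\alpha\in[0,1]$ in the characteristic function, establish a uniform bound ruling out roots escaping to infinity in the right half-plane, use the complex-conjugate symmetry to reduce parity changes to crossings at $\lambda=0$, and then invoke non-degeneracy (equivalently, Theorem~\ref{lem: preservation eq} with $n=0$) to exclude such crossings. The only cosmetic differences are that you give the a~priori bound explicitly and frame the zero-count via the argument principle on a bounded contour, whereas the paper cites \cite{DIE95} for the bound and \cite{Kato95} for continuity of roots.
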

	\begin{proof}
		We again start by giving an intuitive argument. Since the matrix $f'(x_\ast): \mathbb{R}^N \to \mathbb{R}^N$ is real, its non-real eigenvalues appear in pairs. Therefore, if $f'(x_\ast)$ has an odd number of eigenvalues in the right half plane, an odd number of these eigenvalues is real. So the assumptions of the statement imply that $f'(x_\ast)$ has an odd number of eigenvalues on the positive real axis. 
		
		Since the gain matrix $K \in \mathbb{R}^{N \times N}$ is real, non-real eigenvalues of the controlled system appear in pairs as well. Therefore, at least one eigenvalue stays on the positive, real axis as control is applied. This eigenvalue can only move into the left half of the complex plane by crossing the point $0 \in \mathbb{C}$. However, this is forbidden, since by Theorem \ref{lem: preservation eq} the controlled system never has an eigenvalue $0$. 
		
		\medskip
		To make the argument more precise, we fix a matrix $K \in \mathbb{R}^{N \times N}$ and a time delay $T > 0$. Moreover, we introduce the homotopy parameter $\alpha \in [0, 1]$:
		\begin{equation}
			\dot{x}(t) = f(x(t)) + \alpha K \left[x(t) - x(t-T) \right].
		\end{equation}
		We first show that the number of eigenvalues with positive real part can only change via a crossing of the imaginary axis:
		The linearization
		\begin{equation}\label{eq: parameter equilibrium}
			\dot{y}(t) = f'(x_\ast) y(t) + \alpha K \left[y(t) - y(t-T) \right].
		\end{equation}
		has an eigenvalue $\lambda \in \mathbb{C}$ if and only if $\lambda$ is a zero of the equation
		\begin{equation}
			d (\lambda, \alpha) := \det \left( \lambda I - f'(x_\ast) - \alpha K \left[1-e^{- \lambda T}\right] \right).
		\end{equation} 
		Since the function $d(\lambda, \alpha)$ is analytic in $\lambda$ and continuous in $\alpha$, the zeros of $\lambda \mapsto d(\lambda, \alpha)$ depend continuously on $\alpha$ (in the sense of \cite{Kato95}).
		Moreover, we can find a $C > 0$ such that 
		\begin{equation}
			\sup \{ \re \lambda \mid d(\lambda, \alpha) = 0 \} < C 
		\end{equation}  
		for all $\alpha \in [0, 1]$ (see \cite[Section I.4]{DIE95}). 
		Therefore, when varying $\alpha$, we cannot change the number of zeros of $\lambda \mapsto d(\lambda, \alpha)$ in the strict right half plane by roots ``coming from infinity''. Thus the number of roots of $\lambda \mapsto d(\lambda, \alpha)$ in the strict right half plane can only change by a root crossing the imaginary axis. Moreover, if $d(\lambda, \alpha) = 0$, then also $d(\overline{\lambda}, \alpha) = 0$,  so non-real roots appear in complex conjugated pairs. 
		In particular, the \emph{parity} of the number of roots with positive real parts can only change if a single eigenvalue crosses zero.
		
		Now, for $\alpha \in [0,1]$, define 
		\begin{equation} n_\alpha = \# \{ \lambda \mid d(\lambda, \alpha) = 0 \mbox{ and } \re \lambda > 0\} 
		\end{equation}
		i.e., $n_\alpha$ is the number of roots of $\lambda \mapsto d(\lambda, \alpha)$ in the strict right half of the complex plane. 
		By the previous remarks, the parity of $n_\alpha$ can only change if a root of $\lambda \mapsto d(\lambda, \alpha)$ crosses the point $0 \in \mathbb{C}$. However, by assumption, $0 \not \in \sigma(f'(x_\ast))$; so Theorem \ref{lem: preservation eq} implies that 
		\begin{equation}
			d(0, \alpha) \neq 0   
		\end{equation}
		for all $\alpha \in [0, 1]$. 
		It follows that the parity of $n_\alpha$ cannot change. Since by assumption, $n_{\alpha = 0}$ is odd, we conclude that $n_{\alpha}$ is odd for all $\alpha \in [0,1]$. In particular, 
		\begin{equation}
			\dot{y}(t) = f'(x_\ast) y(t) + K \left[y(t) - y(t-T) \right]
		\end{equation}
		has an odd number (and thus at least one) of eigenvalues in the right half of the complex plane. This implies that $x_\ast$ is unstable as a solution of \eqref{eq: odd number eq control}. 
	\end{proof}
	
	We briefly compare the definition of a non-degenerate periodic orbit with the definition of a non-degenerate equilibrium. We can view the equilibrium $x_\ast$ of \eqref{eq: odd number eq} as a periodic solution of arbitrary period $p > 0$. The invariance result of Theorem \ref{lem: preservation} relies on the fact that the time-delay in the controlled system is the same as the period of the periodic orbit. Therefore, to compare the results in this section with the results on periodic orbits, we view the equilibrium $x_\ast$ as a periodic orbit of period $p = T$. In this case, the monodromy operator is given by 
	\begin{equation}
		Y_0(T) = e^{f'(x_\ast) T}.
	\end{equation}
	Therefore, $x_\ast$ is non-degenerate as a periodic orbit of period $T$ if and only if $\frac{2 \pi n i}{T}$ is not an eigenvalue of $f'(x_\ast)$ for every $n \in \mathbb{Z}$. In contrast, for $x_\ast$ to be non-degenerate as an equilibrium, we only require that $0$ is not an eigenvalue of $f'(x_\ast)$. So the assumption that $x_\ast$ is non-degenerate as an equilibrium is milder than the assumption that $x_\ast$ is non-degenerate as a periodic orbit of period $T > 0$.

	\subsection{Corollary: Any-number-resonance limitation for commuting gain matrices with real spectrum}
	
	The next proposition provides an `Any-number limitation' for equilibria, analogous to the statement in Proposition \ref{prop: even number technical} for time-periodic systems. Here we highlight the case in which the linearization has eigenvalues in the right half plane which are in `resonance' with the delay. 
	In the case of commuting gain matrices with real spectrum, we find that the eigenvalues stay on invariant lines parallel to the real axis.
	
	\begin{cor}[Any-number resonance  limitation for equilibria and commuting gain matrices with real spectrum] \label{prop: any number eq}
		Consider the system
		\begin{equation} \label{eq: resonance diagonal ode}
			\dot{x}(t) = f(x(t)), \qquad t \geq 0
		\end{equation}
		with $f: \mathbb{R}^N \to \mathbb{R}^N$ a $C^1$-function. Let $x_\ast \in \mathbb{R}^N$ be an equilibrium of \eqref{eq: resonance diagonal ode} and fix a time delay $T > 0$. Suppose that the time delay is in resonance with one of the unstable eigenvalues, that is,  there exist a real $\lambda_\ast > 0$ and $n \in \mathbb{Z}$ such that
		\begin{equation} \label{eq: resonant ev any number}
			\lambda_\ast + \frac{2 \pi n i}{T} \in \sigma(f'(x_\ast)). 
		\end{equation}
		Assume that $K \in \mathbb{R}^{N \times N}$ satisfies $\sigma(K) \subseteq \mathbb{R}$ and that the gain matrix commutes with the Jacobian, i.e.,
		\begin{equation}
			f'(x_\ast) K = K f'(x_\ast). \label{eq: commute fixed point}
		\end{equation}
		Then $x_\ast$ is unstable as a solution of the controlled system
		\begin{equation} \label{eq: resonance diagonal dde}
			\dot{x}(t) =f(x(t)) + K \left[x(t) - x(t-T) \right]. 
		\end{equation}
	\end{cor}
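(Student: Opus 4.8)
The plan is to imitate the three-step argument of Proposition~\ref{prop: even number technical}, with one simplification and one new ingredient. Since \eqref{eq: resonance diagonal ode} is already autonomous, no Floquet transformation is needed and we may work directly with $A:=f'(x_\ast)$. The new ingredient is the \emph{resonance}: writing $\mu_\ast:=\lambda_\ast+\frac{2\pi n i}{T}$, hypothesis~\eqref{eq: resonant ev any number} says $\mu_\ast\in\sigma(A)$, and the point is that a twist by $e^{\frac{2\pi n i}{T}t}$ turns the complex, resonant scalar delay equation into the \emph{real} scalar delay equation already controlled by Proposition~\ref{prop: odd number technical}, because $e^{-2\pi n i}=1$.

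\emph{Step 1 (common eigenvector).} Since $A$ and $K$ are real and commute by~\eqref{eq: commute fixed point}, they commute as operators on $\mathbb{C}^N$, so the (in general complex, when $n\neq0$) eigenspace $\mathcal{N}(\mu_\ast I-A)\subseteq\mathbb{C}^N$ is invariant under $K$. Because $\sigma\!\left(\left.K\right|_{\mathcal{N}(\mu_\ast I-A)}\right)\subseteq\sigma(K)\subseteq\mathbb{R}$, the restriction of $K$ to this invariant subspace has a \emph{real} eigenvalue $k_\ast$, so there is a vector $y_\ast\in\mathbb{C}^N\setminus\{0\}$ with
\[
A y_\ast=\mu_\ast y_\ast,\qquad K y_\ast=k_\ast y_\ast,\qquad k_\ast\in\mathbb{R}.
\]

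\emph{Step 2 (scalar reduction and resonance cancellation).} As in Proposition~\ref{prop: even number technical}, for any solution $w$ of the scalar DDE $\dot w(t)=\mu_\ast w(t)+k_\ast[w(t)-w(t-T)]$ the function $v(t):=w(t)y_\ast$ solves the linearization~\eqref{eq: linear dde eq} of~\eqref{eq: resonance diagonal dde}; so it suffices to find an exponentially growing solution of this scalar DDE. Write $w(t)=e^{\frac{2\pi n i}{T}t}\,\hat w(t)$. Using $e^{-2\pi n i}=1$ one checks that $w$ solves the above if and only if $\hat w$ solves the \emph{real-valued} scalar DDE $\dot{\hat w}(t)=\lambda_\ast\hat w(t)+k_\ast[\hat w(t)-\hat w(t-T)]$, and that $w(t+T)=\mu w(t)$ if and only if $\hat w(t+T)=\mu\hat w(t)$. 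Now the uncontrolled ODE $\dot{\hat w}=\lambda_\ast\hat w$ has one real Floquet multiplier $e^{\lambda_\ast T}>1$ and no Floquet multiplier equal to $1$, so Proposition~\ref{prop: odd number technical} provides a solution $\hat w_\mu$ of the controlled real scalar DDE with $\hat w_\mu(t+T)=\mu\,\hat w_\mu(t)$ for some real $\mu>1$. Then $v_\mu(t):=e^{\frac{2\pi n i}{T}t}\hat w_\mu(t)\,y_\ast$ solves~\eqref{eq: linear dde eq} with $v_\mu(t+T)=\mu v_\mu(t)$; since $\mu>1$, $v_\mu$ is unbounded, and therefore $x_\ast$ is unstable as a solution of~\eqref{eq: resonance diagonal dde}.

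\emph{Main obstacle.} The only genuinely delicate point is Step~1: one must argue that commutation of the \emph{real} matrices $A$ and $K$ forces the possibly complex eigenspace $\mathcal{N}(\mu_\ast I-A)$ to contain an eigenvector of $K$ belonging to a \emph{real} eigenvalue, which is precisely where $\sigma(K)\subseteq\mathbb{R}$ enters. Everything after that is the scalar reduction of Proposition~\ref{prop: even number technical} combined with the elementary identity $e^{-2\pi n i}=1$, which makes the Pyragas term act on the resonant mode exactly like the already-understood scalar case.
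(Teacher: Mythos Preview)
Your proof is correct and follows essentially the same route as the paper: the paper simply applies Proposition~\ref{prop: even number technical} with $A(t)=f'(x_\ast)$, $B=f'(x_\ast)$, $P(t)\equiv I$, whereas you unpack that proposition's Steps~2--3 directly in the autonomous setting. Your explicit substitution $w(t)=e^{2\pi n i t/T}\hat w(t)$ is precisely the resonance cancellation that converts the complex eigenvalue $\mu_\ast\in\sigma(f'(x_\ast))$ into the real Floquet multiplier $e^{\lambda_\ast T}$, after which both arguments reduce to the real scalar case and invoke Proposition~\ref{prop: odd number technical}.
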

	\begin{proof}
		We apply Proposition \ref{prop: even number technical} with 
		\begin{equation}
			A(t) := f'(x_\ast). 
		\end{equation} 
		The fundamental solution of the linear equation
		\begin{equation}
			\dot{y}(t) = f'(x_\ast) y(t) 
		\end{equation} 
		is given by 
		\begin{equation}
			Y_0(t) = e^{f'(x_\ast) t}
		\end{equation}  
		and hence $B, P(t)$ in \eqref{eq: B defn}--\eqref{eq: P defn} are given by
		\begin{equation} B: = f'(x_\ast), \qquad P(t) \equiv I. 
		\end{equation}
		Therefore, $P(t) K = K P(t)$ is trivially satisfied and \eqref{eq: commute fixed point} implies that $KB = BK$. Moreover, \eqref{eq: resonant ev any number} implies that the monodromy operator $Y_0(T) = e^{f'(x_\ast) T}$ has an eigenvalue $e^{\lambda_\ast T} > 1$. Thus, Proposition \ref{prop: even number technical} implies that 
		\begin{equation} \dot{y}(t) = f'(x_\ast) y(t) + K \left[y(t) - y(t-T) \right]
		\end{equation}
		has at least one real Floquet multiplier larger than $1$ (or, equivalently, at least one eigenvalue in the strict right half plane). Therefore $x_\ast$ is unstable as a solution of \eqref{eq: resonance diagonal dde}. 
	\end{proof}
	
	We briefly comment on the connection between Corollary \ref{prop: any number eq} and Proposition \ref{prop: even number technical}. In Corollary \ref{prop: any number eq}, we assume that the Jacobian $f'(x_\ast)$ has an eigenvalue $\lambda_\ast + \frac{2 \pi n i}{T}$ with $\lambda_\ast > 0$. Therefore, if we view $x_\ast$ as a periodic orbit of period $T > 0$, the monodromy operator
	\begin{equation}
		Y_0(T) = e^{f'(x_\ast) T}
	\end{equation}
	has a Floquet multiplier $e^{\lambda_\ast T} > 1$. By Proposition \ref{prop: even number technical}, this Floquet multiplier stays on the real line in the controlled system, provided that assumption \eqref{eq: commute fixed point} holds. Therefore, the corresponding \emph{eigenvalue} should stay on one of the lines
	\begin{equation}\label{lines}
		\ell_k := \Big \{ z \in \mathbb{C} \mid \im z = \frac{2 \pi k i}{T} \Big \}.
	\end{equation}
	with $k \in \mathbb{Z}$. 
	However, by continuity of the eigenvalues, the eigenvalue cannot `jump' between lines and thus the eigenvalue should stay on the line $\ell_k$ with $k = n$. So the eigenvalue of the controlled system stays on the same line as the eigenvalue of the uncontrolled system.

	
	\subsection{Corollary: Any-number-resonance limitation on commuting gain matrices with complex spectrum} \label{sec:complex equilibria}
	
	For the case of commuting gain matrices with \emph{complex} spectrum, we prove an `Any-number limitation' analogous to the statement in Proposition \ref{prop:complex gain periodic} for periodic orbits. The limitation also applies when the uncontrolled system has unstable eigenvalues whose imaginary part is in resonance with the time-delay.
	See also the result by Hövel and Schöll \cite{HOE05}, where the case of an unstable focus was addressed in detail.
	Note that in contrast to real control gains, but in agreement with Proposition \ref{prop:complex gain periodic} for periodic orbits, the lines \eqref{lines} are not invariant, see Figure \ref{EigenvalueCurve2}.

	\begin{figure}
		\includegraphics[width=\columnwidth]{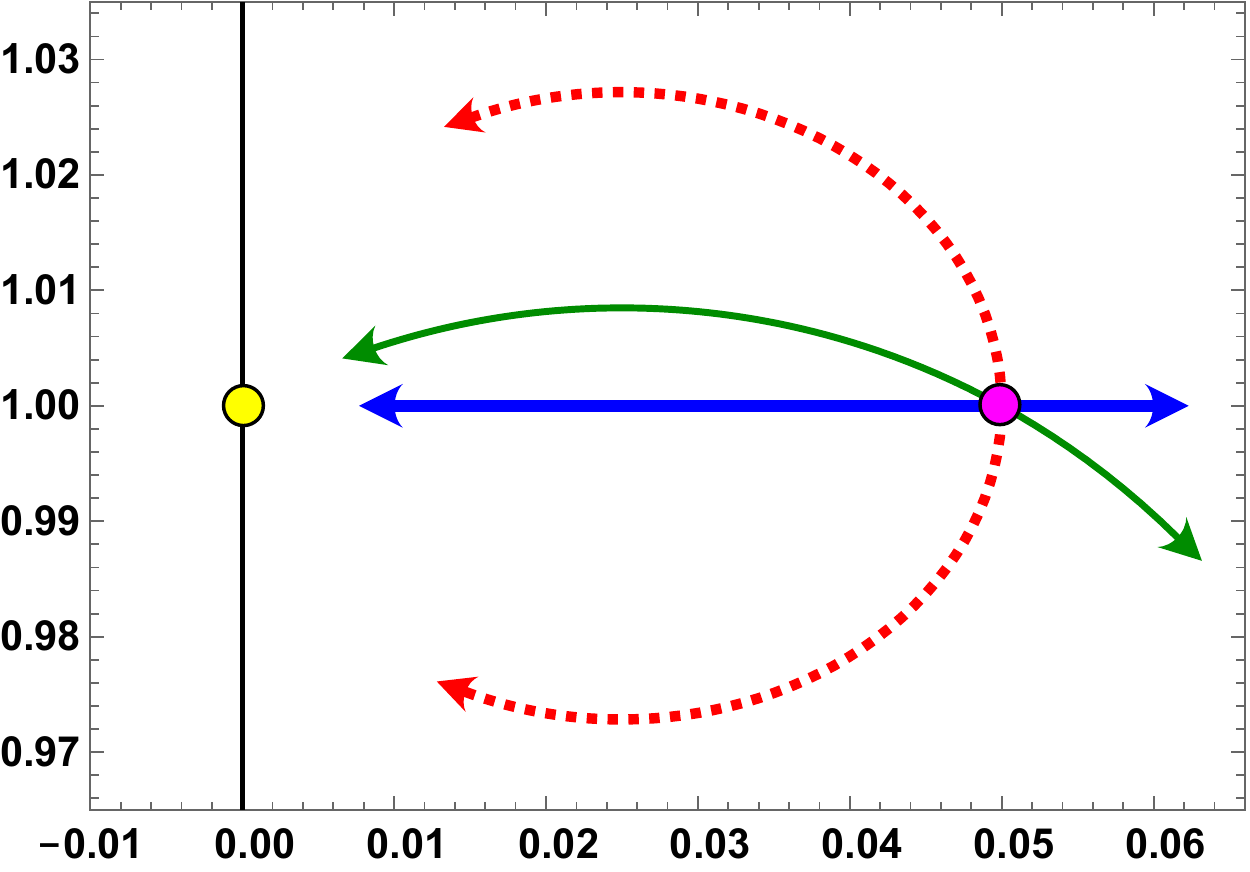}
		\caption{\label{EigenvalueCurve2} Imaginary axis and eigenvalues in the complex plane depending of $k_\ast$. Yellow dot: Geometrically invariant resonating center at $i$. Parameter values: $\lambda_\ast=0.05$ (magenta dot), $T= 2 \pi$ $n=1$. Blue: real $k_\ast=\kappa$ (note the invariant line!), red (dotted): purely imaginary $k_\ast$, green (thin): arg$(k_\ast)=e^{0.8\pi i}$. Note that the picture is seemingly asymmetric, because we have not drawn the complex conjugated resonating center.} 
	\end{figure}
	\begin{cor} [Any-number-resonance  limitation for equilibria and commuting gain matrices with complex spectrum]\label{thm: resonance} Consider the system 
		\begin{align} \label{eq: ode 2}
			\dot{x}(t) = f(x(t)), \qquad t \geq 0
		\end{align}
		with $f: \mathbb{R}^N \to \mathbb{R}^N$ a $C^1$-function. Let $x_\ast \in \mathbb{R}^N$ be an equilibrium of \eqref{eq: ode 2} and fix $T > 0$. Suppose there exists a $\lambda_\ast > 0$ and $n \in \mathbb{Z}$ such that 
		\begin{equation} \label{eq: ev complex resonance}
			\lambda_\ast \pm\frac{2 \pi n i}{T} \in \sigma(f'(x_\ast)).
		\end{equation}
		Assume that $K \in \mathbb{R}^{N \times N}$ commutes with the Jacobian, i.e. 
		\begin{equation} \label{eq: commute fixed point 2}
			f'(x_\ast) K = K f'(x_\ast). 
		\end{equation}
		Then $x_\ast$ is unstable as a solution of the controlled system
		\begin{equation} \label{eq: pyragas2}
			\dot{x}(t) = f(x(t)) + K \left[x(t) - x(t-T) \right].
		\end{equation}
	\end{cor}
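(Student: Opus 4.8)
The plan is to obtain this exactly as Corollary \ref{prop: any number eq} was obtained from Proposition \ref{prop: even number technical}, but now invoking the complex-spectrum version Proposition \ref{prop:complex gain periodic}. First I would view the equilibrium $x_\ast$ of \eqref{eq: ode 2} as a periodic solution of period $T > 0$, so that the linearization \eqref{eq: linear ode eq} is a trivially $T$-periodic system $\dot y(t) = A(t) y(t)$ with $A(t) \equiv f'(x_\ast)$. Its fundamental solution is $Y_0(t) = e^{f'(x_\ast) t}$, so in the notation of \eqref{eq: B defn}--\eqref{eq: P defn} one may take $B := f'(x_\ast)$ and $P(t) \equiv I$.

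Next I would verify the hypotheses of Proposition \ref{prop:complex gain periodic}. Condition \eqref{eq: commute P 2} is automatic because $P \equiv I$, and \eqref{eq: commute B 2} is precisely the assumed commutativity \eqref{eq: commute fixed point 2}. For the spectral hypothesis, observe that $\lambda_\ast + \frac{2 \pi n i}{T} \in \sigma(f'(x_\ast)) = \sigma(B)$ implies $Y_0(T) = e^{BT}$ has the eigenvalue $e^{(\lambda_\ast + 2\pi n i/T) T} = e^{\lambda_\ast T} e^{2 \pi n i} = e^{\lambda_\ast T}$, which is real and strictly larger than $1$ since $\lambda_\ast > 0$. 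Here it is essential to make the cancellation $e^{2\pi n i} = 1$ explicit: the resonance between the delay $T$ and the imaginary part of the eigenvalue of $f'(x_\ast)$ is exactly what turns a genuinely complex eigenvalue of the ODE into a \emph{real} Floquet multiplier of the period-$T$ system, which is what allows us to apply the periodic-orbit proposition off the shelf rather than re-running its Step 4--Step 5 Hopf-curve analysis.

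Then Proposition \ref{prop:complex gain periodic} applies to $\dot y(t) = f'(x_\ast) y(t) + K[y(t) - y(t-T)]$ and produces a real Floquet multiplier $\mu$ with $\left| \mu \right| > 1$. Finally I would translate this back to the generator of the DDE: this linearization is \eqref{eq: linear dde eq}, and a Floquet multiplier $\mu$ with $\left| \mu \right| > 1$ corresponds to a root $\lambda$ of \eqref{eq: ce dde 2} with $e^{\lambda T} = \mu$, hence $\re \lambda = \frac{1}{T} \log \left| \mu \right| > 0$; therefore $x_\ast$ is unstable as a solution of \eqref{eq: pyragas2}. I do not expect a genuine obstacle, since all the work sits in Proposition \ref{prop:complex gain periodic}; the only points needing care are the explicit resonance cancellation above and the observation that ``real Floquet multiplier outside the unit circle'' also allows $\mu < -1$, which still yields $\re \lambda > 0$. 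I might optionally append, mirroring the remark after Corollary \ref{prop: any number eq}, a short sentence noting that by continuity the unstable eigenvalue stays on the line $\ell_n$ of \eqref{lines} when $\sigma(K) \subseteq \mathbb{R}$ but leaves it for genuinely complex $K$, cf.\ Figure \ref{EigenvalueCurve2}.
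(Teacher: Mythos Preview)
Your proposal is correct and follows essentially the same route as the paper: set $A(t)\equiv f'(x_\ast)$, take $B=f'(x_\ast)$ and $P(t)\equiv I$, verify the commutativity hypotheses, use the resonance $e^{(\lambda_\ast+2\pi ni/T)T}=e^{\lambda_\ast T}>1$ to produce a real Floquet multiplier larger than $1$, and invoke Proposition~\ref{prop:complex gain periodic} to conclude instability. Your extra care about the case $\mu<-1$ and the explicit translation from Floquet multiplier to characteristic root are harmless elaborations of what the paper compresses into the parenthetical ``(or, equivalently, at least one eigenvalue in the strict right half plane)''.
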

	\begin{proof}
		We apply Proposition \ref{prop:complex gain periodic} with 
		\begin{equation}
			A(t) := f'(x_\ast). 
		\end{equation} 
		The fundamental solution of the linear equation
		\begin{equation}
			\dot{y}(t) = f'(x_\ast) y(t) 
		\end{equation} 
		is given by 
		\begin{equation}
			Y_0(t) = e^{f'(x_\ast) t}
		\end{equation}  
		and hence $B, P(t)$ in \eqref{eq: B defn}--\eqref{eq: P defn} are given by
		\begin{equation} B: = f'(x_\ast), \qquad P(t) \equiv I. 
		\end{equation}
		Therefore, $P(t) K = K P(t)$ is trivially satisfied and \eqref{eq: commute fixed point 2} implies that $KB = BK$. Moreover, \eqref{eq: ev complex resonance} implies that the monodromy operator $X_0(T) = e^{f'(x_\ast) T}$ has an eigenvalue $e^{\lambda_\ast T} > 1$. Thus, Proposition \ref{prop: even number technical} implies that 
		\begin{equation} \dot{y}(t) = f'(x_\ast) y(t) + K \left[y(t) - y(t-T) \right]
		\end{equation}
		has at least one Floquet multiplier outside the unit circle (or, equivalently, at least one eigenvalue in the strict right half plane). Therefore $x_\ast$ is unstable as a solution of \eqref{eq: pyragas2}. 
	\end{proof}
	
	As an application, we consider 2-dimensional systems where both the Jacobian and the control matrix have two complex conjugated eigenvalues. In this case, stabilization is impossible if the time delay is chosen in resonance with the eigenvalues of the Jacobian, see also \cite{HOE05}. 
	\begin{cor}
		Consider the system 
		\begin{align} \label{eq: ode 3}
			\dot{x}(t) = f(x(t)), \qquad t \geq 0
		\end{align}
		with $f: \mathbb{R}^2 \to \mathbb{R}^2$ a $C^1$-function. Let $x_\ast \in \mathbb{R}^N$ be an equilibrium of \eqref{eq: ode 3}. Suppose that the Jacobian $f'(x_\ast)$ is given by
		\begin{equation}
			f'(x_\ast) = \begin{pmatrix}
				\lambda & - \omega \\
				\omega & \lambda
			\end{pmatrix}
		\end{equation}
		with $\lambda > 0$ and $\omega > 0$. Moreover, let $K$ be given by
		\begin{equation} \label{eq:K ev}
			K = \begin{pmatrix} \alpha & - \beta \\
				\beta & \alpha
			\end{pmatrix}
		\end{equation}
		with $\alpha, \beta \in \mathbb{R}$. Then, for $n \in \mathbb{N}$, $x_\ast$ is unstable as a solution of the controlled system
		\begin{equation}
			\dot{x}(t) = f(x(t)) + K \left[x(t) - x\left(t- \frac{2 \pi n}{\omega} \right) \right]. 
		\end{equation}
	\end{cor}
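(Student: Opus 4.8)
The plan is to apply Corollary~\ref{thm: resonance} directly; the only work is to check its two hypotheses for the matrices at hand. First I would observe that the Jacobian and the gain matrix both lie in the two-dimensional real algebra
\begin{equation}
	\mathcal{A} := \left\{ a I + b J \mid a, b \in \mathbb{R} \right\}, \qquad J := \begin{pmatrix} 0 & -1 \\ 1 & 0 \end{pmatrix},
\end{equation}
since $f'(x_\ast) = \lambda I + \omega J$ and $K = \alpha I + \beta J$. As $I$ is central and $J$ commutes with itself, any two elements of $\mathcal{A}$ commute, so in particular $f'(x_\ast) K = K f'(x_\ast)$; this is exactly the commutativity assumption~\eqref{eq: commute fixed point 2}.

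Next I would verify the resonance condition~\eqref{eq: ev complex resonance}. Since $J$ has eigenvalues $\pm i$, the matrix $f'(x_\ast) = \lambda I + \omega J$ has eigenvalues $\lambda \pm i \omega$. For $n \in \mathbb{N}$ the prescribed delay $T = 2\pi n/\omega$ is positive and satisfies $2\pi n / T = \omega$, hence
\begin{equation}
	\lambda \pm \frac{2 \pi n i}{T} = \lambda \pm i \omega \in \sigma\!\left( f'(x_\ast) \right).
\end{equation}
Thus~\eqref{eq: ev complex resonance} holds with $\lambda_\ast := \lambda > 0$ and this same $n$ (viewed in $\mathbb{Z}$). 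Corollary~\ref{thm: resonance} then immediately gives that $x_\ast$ is unstable as a solution of
\begin{equation}
	\dot{x}(t) = f(x(t)) + K \left[ x(t) - x\!\left( t - \frac{2 \pi n}{\omega} \right) \right],
\end{equation}
which is the assertion.

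There is no genuine obstacle here: the statement is essentially a worked example of Corollary~\ref{thm: resonance}, with the delay deliberately tuned so that the imaginary part $\omega$ of the unstable complex pair equals $2\pi n/T$, and with Hopf-type normal-form matrices, which automatically commute. The only point requiring care is the bookkeeping of the integer $n$: the index labelling the resonating center $\frac{2\pi n i}{T}$ and the integer appearing in the delay $T = 2\pi n/\omega$ must coincide, which is how the hypotheses have been arranged. It would also be natural to add a remark that this recovers, as a special case, the impossibility of stabilizing an unstable focus at resonant delay discussed in~\cite{HOE05}, the geometrically invariant object responsible being precisely the resonating center at $i$ depicted in Figure~\ref{EigenvalueCurve2}.
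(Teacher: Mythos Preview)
Your proof is correct and follows exactly the same approach as the paper: verify $\sigma(f'(x_\ast)) = \{\lambda \pm i\omega\}$, check that $K$ commutes with $f'(x_\ast)$, and apply Corollary~\ref{thm: resonance} with $T = 2\pi n/\omega$. The paper's own argument is just a terser version of what you wrote, omitting the explicit algebra $\mathcal{A}$ and the bookkeeping about the integer $n$.
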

	\begin{proof}
		The spectrum of $f'(x_\ast)$ is given by 
		\begin{equation}
			\sigma(f'(x_\ast)) = \{ \lambda \pm i \omega \}. 
		\end{equation}
		Moreover, with $K$ as in \eqref{eq:K ev}, we have that $f'(x_\ast) K  =K f'(x_\ast)$. Thus, if we apply Corollary \ref{thm: resonance} with $T = \frac{2 \pi n}{\omega}$ the claim follows. 
	\end{proof}
	
	Another application is given by a ring of $n$ coupled Stuart-Landau oscillators, which shows a multitude of ponies-on-a-merry-go-round-solutions. It has been proven\cite{SCH16} that only the fully synchronized orbit can be stabilized via Pyragas control. This is a direct consequence of Corollary \ref{thm: resonance}: The time delay $2\pi$ is prescribed by the periodic orbit and in resonance with the imaginary part $1$ of all complex eigenvalues near Hopf bifurcation. In the linearization, the system decouples into $n$ complex equations.  For all periodic orbits except for the synchronized one, at least one of these equations will have an empty resonating center at $i$ together with an eigenvalue with positive real part \cite{SCH16}. Therefore, Corollary \ref{thm: resonance} directly forbids stabilization via Pyragas control, at equivariant Hopf bifurcation. The result in\cite{SCH16} shows in addition that this failure of Pyragas control persists close to the Hopf bifurcation point. 
	\section{Conclusion}
	
	We proved two fundamental invariance principles of Pyragas control for periodic orbits and equilibria.  A number of limitations on Pyragas stabilization of periodic solutions and equilibria follow. 
	Compared to previous literature on this subject\cite{NAK97,JUS99,HOO12}, our approach provides a new perspective on two crucial points. First of all, in our analysis, we emphasized the geometric rather than the algebraic aspects of the Floquet or eigenvalue problem. 
	Instead of purely calculating algebraic multiplicities, we count the geometric multiplicity, that is, the dimension of the eigenspace. 
	The geometric multiplicity of the determining and resonating centers stay invariant under Pyragas control, while the algebraic number of eigenvalues does not\cite{FIE07,JUS07}. Moreover, in our formulation of limitations to control, we emphasised properties of the unstable object itself, rather than the properties of the uncontrolled dynamical systems. For controllability, it matters whether there exists an determining or resonating center, but its provenance is irrelevant. These two shifts of perspective provide a clear and unifying understanding of the previously often misinterpreted odd-number limitation, and moreover lead naturally to `any-number limitations' for commutative gain matrices.

	\begin{acknowledgments}
		This work was partially supported by SFB 910 “Control of self-determining nonlinear systems: Theoretical methods and concepts of application”, project A4: “Spatio-temporal patterns: observation, control, and design”. The work of BdW was supported by the Berlin Mathematical School (BMS).  We are grateful to Prof. Dr. Bernold Fiedler and Prof. Dr. Sjoerd Verduyn Lunel for their constant support and encouragement. We thank Dr. Jia-Yuan Dai and Alejandro L\'opez Nieto  for many fruitful discussions and helpful remarks.

	\end{acknowledgments}

	\appendix
	\section{Floquet theory}
	
	In Section \ref{sec:real periodic}, we have  crucially used Floquet theory to transform the linear, time-periodic ODE $\dot{y}(t) = A(t) y(t)$ into a linear, autonomous ODE. 
	For the comfort of the reader, we include a short summary for Floquet theory for ordinary and delay differential equations. A more general treatment for Floquet theory for ODE can, for example, be found in \cite[Chapter III.7]{haleODE}; Floquet theory for DDE is extensively treated in \cite[Chapter XIII]{DIE95} and \cite[Chapter 8]{HAL93}. 
	
	\subsection{Floquet theory for ODE} \label{sec:appendix ode}
	
	Consider the linear, time-periodic system
	\begin{equation} \label{eq: A appendix}
		\dot{y}(t) = A(t) y(t), \qquad t \geq 0
	\end{equation}
	with $y(t) \in \mathbb{R}^N$ and $A(t) \in \mathbb{R}^{N \times N}$. Assume that there exists a $T > 0$ such that $A(t+T) = A(t)$, i.e., the system \eqref{eq: A appendix} is $T$-periodic. Denote by $Y_0(t) \in \mathbb{R}^{N \times N}, \ t \geq 0$ the fundamental solution of \eqref{eq: A appendix}, i.e., $Y_0(t)$ satisfies
	\begin{align} \label{eq: fund sol appendix}
		\begin{cases}
			\frac{d}{dt} Y_0(t) = A(t) Y_0(t), \qquad t > 0 \\
			Y_0(0) = I. 
		\end{cases}
	\end{align}
	We refer to the operator $Y_0(T): \mathbb{R}^N \to \mathbb{R}^N$ as the \emph{monodromy operator} of \eqref{eq: A appendix}; we refer to the eigenvalues of $Y_0(T)$ as the \emph{Floquet multipliers} of system \eqref{eq: A appendix}. 
	
	Since $Y_0(t)$ is invertible for all $t \geq 0$, we in particular have that $0 \not \in \sigma(Y_0(T))$. Therefore, there exists a (non-unique!) matrix $B \in \mathbb{C}^{N \times N}$ such that 
	\begin{equation}
		Y_0(T) = e^{BT}. \label{eq: B appendix}
	\end{equation}
	The following theorem says that there exists a time-periodic transformation that transforms system \eqref{eq: A appendix} into an autonomous, linear system. 
	
	\begin{theorem}[Floquet] Consider system \eqref{eq: A appendix} with $A(t+T) = A(t)$ for all $t \in \mathbb{R}$. Let $Y_0(t)$ be as in \eqref{eq: fund sol appendix} and let $B \in \mathbb{C}^{N \times N}$ be as in \eqref{eq: B appendix}. 
		
		Then the map
		\begin{equation} \label{eq: P appendix}
			P(t) : = Y_0(t) e^{-Bt}
		\end{equation}
		satisfies $P(t+T) = P(t)$ for all $t \in \mathbb{R}$. Moreover, the coordinate transformation $y(t) = P(t)v(t)$ transforms system \eqref{eq: A appendix} into the system
		\begin{equation} \label{eq: B ode appendix}
			\dot{v}(t) = B v(t).
		\end{equation}
	\end{theorem}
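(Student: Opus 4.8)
The plan is to verify both claims directly from the definition $P(t) = Y_0(t)e^{-Bt}$, with the only non-routine ingredient being the cocycle (semigroup) property of the fundamental solution, which is where periodicity of $A$ enters.

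First I would establish that $Y_0(t+T) = Y_0(t)Y_0(T)$ for all $t \geq 0$. To see this, set $Z(t) := Y_0(t+T)$ and $W(t) := Y_0(t)Y_0(T)$. Using $A(t+T) = A(t)$ one checks $\dot Z(t) = A(t+T)Y_0(t+T) = A(t)Z(t)$ and $\dot W(t) = A(t)Y_0(t)Y_0(T) = A(t)W(t)$, while $Z(0) = Y_0(T) = W(0)$; uniqueness for the linear matrix initial value problem then gives $Z \equiv W$. I would also record that $Y_0(t)$ is invertible for every $t$ (the determinant satisfies $\det Y_0(t) = \exp\!\int_0^t \operatorname{tr}A(s)\,ds \neq 0$), so $P(t)$ is invertible with $P(t)^{-1} = e^{Bt}Y_0(t)^{-1}$, and that the matrices built from $B$ — namely $e^{\pm Bt}$, $B$ itself, and $Y_0(T) = e^{BT}$ — mutually commute.

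For the periodicity of $P$, I would compute $P(t+T) = Y_0(t+T)e^{-B(t+T)} = Y_0(t)Y_0(T)\,e^{-Bt}e^{-BT}$, using the semigroup identity and $e^{-B(t+T)} = e^{-Bt}e^{-BT}$. Since $Y_0(T)$ commutes with $e^{-Bt}$ and $e^{-BT} = Y_0(T)^{-1}$, the right-hand side collapses to $Y_0(t)e^{-Bt}Y_0(T)Y_0(T)^{-1} = Y_0(t)e^{-Bt} = P(t)$. For the change of variables, I would substitute $y(t) = P(t)v(t)$ into $\dot y = A(t)y$ to get $\dot P(t)v(t) + P(t)\dot v(t) = A(t)P(t)v(t)$, hence $\dot v(t) = P(t)^{-1}\bigl(A(t)P(t) - \dot P(t)\bigr)v(t)$. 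Differentiating $P(t) = Y_0(t)e^{-Bt}$ gives $\dot P(t) = A(t)Y_0(t)e^{-Bt} - Y_0(t)Be^{-Bt} = A(t)P(t) - Y_0(t)Be^{-Bt}$, so $A(t)P(t) - \dot P(t) = Y_0(t)Be^{-Bt}$, and therefore $P(t)^{-1}\bigl(A(t)P(t) - \dot P(t)\bigr) = e^{Bt}Y_0(t)^{-1}Y_0(t)Be^{-Bt} = e^{Bt}Be^{-Bt} = B$. Thus the transformed system is $\dot v(t) = Bv(t)$, as claimed.

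The only genuinely delicate point is the bookkeeping of commutativity among the exponentials of $B$ (and between $Y_0(T)$ and $e^{-Bt}$) in the periodicity computation; the rest is uniqueness for linear matrix ODEs plus elementary differentiation, so I do not expect a serious obstacle.
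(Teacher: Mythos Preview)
Your proof is correct and follows essentially the same route as the paper: verify periodicity of $P$ via $Y_0(t+T)=Y_0(t)Y_0(T)$ and $e^{-BT}=Y_0(T)^{-1}$, then differentiate $P(t)=Y_0(t)e^{-Bt}$ and substitute into $\dot y = A(t)y$. The only difference is cosmetic---you prove the cocycle identity $Y_0(t+T)=Y_0(t)Y_0(T)$ explicitly and track the commutativity of $B$ with its exponentials, whereas the paper uses both silently and writes $\dot P = A P - P B$ directly.
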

	\begin{proof}
		\emph{Step 1:} We first prove that $P(t)$ as defined in \eqref{eq: P appendix} is $T$-periodic. Indeed, we have that 
		\begin{align}
			P(t+T) &= Y_0(t+T) e^{-BT} e{- Bt} \\
			&= Y_0(t+T) Y_0(T)^{-1} e^{-Bt} \\
			&= Y_0(t) e^{-Bt} = P(t). 
		\end{align}
		\emph{Step 2:} We now prove that the transformation $y(t) = P(t) v(t)$ transforms system \eqref{eq: A appendix} into system \eqref{eq: B ode appendix}. Differentiating \eqref{eq: P appendix} gives that 
		\begin{align}
			\frac{d}{dt} P(t) &= A(t) Y_0(t) e^{-Bt} - Y_0(t) e^{-Bt} B \\
			&=A(t) P(t) - P(t) B.
		\end{align}
		If $y(t)$ is a solution of \eqref{eq: A appendix}, then $y(t) = P(t) v(t)$ satisfies
		\begin{align}
			A(t) P(t) v(t) &= \left( \frac{d}{dt} P(t) \right) v(t) + P(t) \frac{d}{dt} v(t) \\
			&= A(t) P(t) v(t) - P(t) B v(t) + P(t) \frac{d}{dt} v(t).
		\end{align}
		which implies 
		\begin{equation}
			P(t) \frac{d}{dt} v(t) = P(t) B v(t). 
		\end{equation}
		Since $P(t)$ is invertible for all $t \in \mathbb{R}$, this implies that $v$ satisfies \eqref{eq: B ode appendix}. 
	\end{proof}
	
	Let $y_0$ be an eigenvector of $Y_0(T) = e^{BT}$ with eigenvalue $\mu$. Then the decomposition 
	\begin{equation} \label{eq: per exp ode}
		Y_0(t) = P(t) e^{Bt}
	\end{equation}
	implies that $y(t) : = P(t) e^{Bt} y_0$ is a solution of \eqref{eq: A appendix} that satisfies $y(t+T) = \mu y(t)$. Vice versa, if $y(t)$ is a solution of  \eqref{eq: A appendix}, then $y(t) = Y_0(t) y(0)$; so if $y(t+T) = \mu y(t)$, then $y(0)$ is an eigenvalue of $Y_0(T) = e^{BT}$ with eigenvalue $\mu$. Hence we conclude:
	
	\begin{lemma} \label{lem: count ev ode}
		Let $\mu \in \sigma(X(T))$. Then there is a one-to-one correspondence between eigenvectors $y_0$ of $Y_0(T)$ with eigenvalue $\mu$ and solutions of \eqref{eq: A appendix} with $y(t+T) = \mu y(t)$. 
	\end{lemma}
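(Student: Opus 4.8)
The plan is to exhibit explicitly the two maps realizing the correspondence and to check that they are mutually inverse. For the forward direction I would take an eigenvector $y_0$ of $Y_0(T)$ with $Y_0(T) y_0 = \mu y_0$ and set $y(t) := Y_0(t) y_0$; by the defining property of the fundamental solution this solves \eqref{eq: A appendix} with $y(0) = y_0$. To see that $y(t+T) = \mu y(t)$, I would invoke the Floquet decomposition $Y_0(t) = P(t) e^{Bt}$ from \eqref{eq: per exp ode}, together with $P(t+T) = P(t)$ and $e^{BT} = Y_0(T)$: then $y(t+T) = P(t+T) e^{B(t+T)} y_0 = P(t) e^{Bt} \left( e^{BT} y_0 \right) = P(t) e^{Bt} (\mu y_0) = \mu\, Y_0(t) y_0 = \mu y(t)$.

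Conversely, given a nonzero solution $y$ of \eqref{eq: A appendix} with $y(t+T) = \mu y(t)$, I would set $y_0 := y(0)$. Uniqueness of solutions to the linear initial value problem gives $y(t) = Y_0(t) y_0$ for all $t \geq 0$; evaluating the relation $y(t+T) = \mu y(t)$ at $t = 0$ then yields $Y_0(T) y_0 = y(T) = \mu y(0) = \mu y_0$, so $y_0$ is an eigenvector of $Y_0(T)$ with eigenvalue $\mu$ (nonzero, since $y$ is nonzero). The two assignments $y_0 \mapsto Y_0(\cdot) y_0$ and $y \mapsto y(0)$ are linear and mutually inverse, hence establish the claimed one-to-one correspondence; in fact they give a linear isomorphism between $\mathcal{N}\left( \mu I - Y_0(T) \right)$ and the space of solutions satisfying $y(t+T) = \mu y(t)$.

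I do not expect any serious obstacle here: the statement is a direct bookkeeping consequence of the existence–uniqueness theorem for linear ODEs and the Floquet factorization \eqref{eq: per exp ode}, both already established above. The only points requiring a modicum of care are that solutions are considered on $t \geq 0$, so that ``solution'' is read as the restriction of the unique solution determined by $y(0)$, and that one keeps nonzero vectors and nonzero solutions on both sides so that the correspondence is genuinely between eigenvectors and nontrivial relatively $T$-periodic solutions.
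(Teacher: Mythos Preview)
Your argument is correct and essentially identical to the paper's: both directions use the Floquet factorization $Y_0(t)=P(t)e^{Bt}$ together with periodicity of $P$ for the forward map, and uniqueness of solutions plus evaluation at $t=0$ for the converse. You have merely spelled out in slightly more detail that the two assignments are mutually inverse linear maps.
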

	
	\subsection{Floquet theory for DDE} \label{sec:appendix dde}
	We recall the main results of Floquet theory for DDE, focussing on the DDE
	\begin{equation} \label{eq: dde appendix}
		\dot{y}(t) = A(t) y(t) + K \left[y(t) - y(t-T)\right]
	\end{equation}
	with $T > 0$, $K \in \mathbb{R}^{N \times N}$ and $A(t+T) = A(t)$ for all $t \in \mathbb{R}$. 
	
	For a general linear time-periodic, linear DDE, the non-zero spectrum of the monodromy operator consists of eigenvalues of finite multiplicity, see \cite[Chapter VIII]{DIE95}. The next lemma proves this for the DDE \eqref{eq: dde appendix}. 
	
	\begin{lemma} 
		Let $Y_1(t), t \geq 0$ be the fundamental solution of system \eqref{eq: dde appendix}. Then the operator
		\begin{equation}
			Y_1(T): C \left([-T, 0], \mathbb{R}^n \right) \to C \left([-T, 0], \mathbb{R}^n \right)
		\end{equation}
		is compact and hence the non-zero spectrum consists of isolated eigenvalues of finite algebraic multiplicity. 
	\end{lemma}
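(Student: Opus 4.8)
The plan is to prove compactness of $Y_1(T)$ directly, via the Arzel\`{a}--Ascoli theorem, exploiting crucially that the delay in \eqref{eq: dde appendix} coincides with the period $T$; the spectral statement is then immediate from Riesz--Schauder theory. First I would recall the state-space description: for $\phi \in C([-T,0],\mathbb{R}^n)$ let $y=y(\cdot\,;\phi)$ be the solution of \eqref{eq: dde appendix} with initial segment $y_0=\phi$, so that by definition $(Y_1(T)\phi)(\theta)=y(T+\theta)$ for $\theta\in[-T,0]$. Because the delay equals $T$, on the interval $[0,T]$ the history term is $y(t-T)=\phi(t-T)$, and hence $y|_{[0,T]}$ solves the \emph{non-delayed}, inhomogeneous linear ODE
\[
\dot{y}(t)=\big(A(t)+K\big)\,y(t)-K\,\phi(t-T),\qquad t\in[0,T],
\]
with initial condition $y(0)=\phi(0)$. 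In other words, computing $Y_1(T)\phi$ reduces to solving an ODE whose forcing term is continuous and depends linearly on $\phi$.

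Next I would extract the two estimates that feed Arzel\`{a}--Ascoli. Since $A(t)=\partial_x f(x_\ast(t),t)$ is continuous and $T$-periodic, it is bounded on $[0,T]$; set $M:=\sup_{t\in[0,T]}\|A(t)+K\|$ and let $U(t,s)$ be the evolution family of $\dot y=(A(t)+K)y$. By the variation-of-constants formula $y(t)=U(t,0)\phi(0)-\int_0^t U(t,s)K\phi(s-T)\,ds$, so Gr\"{o}nwall's inequality (applied to this integral identity) yields a constant $C_1=C_1(M,\|K\|,T)$ with $\|y\|_{C([0,T])}\le C_1\|\phi\|_{C([-T,0])}$. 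Substituting this back into the ODE gives $\|\dot y\|_{C([0,T])}\le(MC_1+\|K\|)\,\|\phi\|=:C_2\|\phi\|$. Therefore, for $\phi$ in the closed unit ball of $C([-T,0],\mathbb{R}^n)$, the images $Y_1(T)\phi$ are uniformly bounded (by $C_1$) and uniformly Lipschitz in $\theta$ (with constant $C_2$), hence equicontinuous.

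The conclusion is then immediate: by Arzel\`{a}--Ascoli the image of the unit ball under $Y_1(T)$ is relatively compact in $C([-T,0],\mathbb{R}^n)$, so $Y_1(T)$ is a compact operator. The spectral assertion follows from the classical Riesz--Schauder theorem: for a compact operator on an infinite-dimensional Banach space, $\sigma(Y_1(T))\setminus\{0\}$ consists of eigenvalues, each isolated in the spectrum and of finite algebraic multiplicity, with $0$ as the only possible accumulation point.

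I expect the one point requiring genuine care — the ``main obstacle'' — to be the smoothing step: one must observe that integrating the ODE once turns the merely continuous datum $\phi$ into a function $y|_{[0,T]}$ that is Lipschitz (indeed $C^1$), and, more importantly, that this gain of regularity is \emph{uniform} over the unit ball, which is exactly where Gr\"{o}nwall and the boundedness of $A$ over one period enter. This argument works precisely because the delay equals the period, so a single ODE step suffices; for a period shorter than the delay one would instead have to compose several such steps before compactness emerges. Everything else — well-posedness of the linear DDE, Arzel\`{a}--Ascoli, and Riesz--Schauder — is standard.
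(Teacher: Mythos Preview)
Your proposal is correct and follows essentially the same route as the paper: reduce the DDE on $[0,T]$ to the inhomogeneous ODE $\dot y=(A(t)+K)y-K\phi(\cdot-T)$ via the coincidence of delay and period, write the solution by variation of constants, and conclude compactness via Arzel\`a--Ascoli (with Riesz--Schauder for the spectral consequence). The only difference is cosmetic: you spell out the uniform bound and equi-Lipschitz estimates explicitly, whereas the paper writes the integral formula for $(Y_1(T)\phi)(\theta)$ and invokes Arzel\`a--Ascoli directly.
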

	\begin{proof}
		Let $Y_A(t), t \geq 0$ be the fundamental solution of the ODE
		\begin{equation}
			\dot{y}(t) = A(t) y(t) + K y(t)
		\end{equation}  
		with $Y_A(0) = I$. 
		Fix $\phi \in C \left([-T, 0], \mathbb{R}^N \right)$ and consider the initial value problem
		\begin{align}
			\begin{cases}
				\dot{y}(t) = A(t) y(t) + K \left[y(t) - y(t-T) \right], \qquad &t \in [0, T] \\
				y(t) = \phi(t), \qquad &t \in [-T, 0]
			\end{cases}
		\end{align}
		Since $y(t-T) = \phi(t-T)$ for $t \in [0, T]$, Variation of Constants gives that 
		\begin{equation}
			y(t) = Y_A(t) \phi(0) - \int_0^t Y_A(t) Y_A(s)^{-1} K \phi(s-T) ds
		\end{equation}  
		or, for $\theta \in [-T, 0]$, 
		\begin{widetext}
			\begin{align}
				y(T+\theta) & = Y_A(T+\theta) \phi(0) - \int_{0}^{T+\theta} Y_A(T+\theta) Y_A(s)^{-1} K \phi(s-T) ds \\
				&= Y_A(T+\theta) \phi(0) - \int_{-T}^s Y_A(T+\theta) Y_A(T+s)^{-1} K \phi(s) ds \\
				& = Y_A(T+\theta) \phi(0) - \int_{-T}^s Y_A(\theta) Y_A(s)^{-1} K \phi(s) ds.
			\end{align}
		\end{widetext}
		Hence the monodromy operator is given by 
		\begin{equation}
			(Y_1(T) \phi)(\theta) = Y_A(T+\theta) \phi(0) - \int_{-T}^s Y_A(\theta) Y_A(s)^{-1} K \phi(s) ds.
		\end{equation} 
		The Arzelà-Ascoli theorem implies that $X(T)$ is a compact operator, and hence its non-zero spectrum consists of isolated eigenvalues of finite multiplicity. 
	\end{proof}
	
	For time-periodic ODE, the fundamental solution can be written as the product of a time-periodic and an exponential factor, cf. \eqref{eq: per exp ode}. For a general DDE, one cannot obtain such a decomposition (see \cite[Chapter VIII, Exercise 4.3]{DIE95}. However, a statement similar to Lemma \ref{lem: count ev ode} still holds:
	
	\begin{lemma}
		Let $Y_1(T)$ be the monodromy operator of \eqref{eq: dde appendix} and let $\mu \in \sigma_{pt}(X(T))$. Then there is a one-to-one correspondence between eigenfunctions $\phi$ of $Y_1(T)$ with eigenvalue $\mu$ and solutions of \eqref{eq: dde appendix}  with $y(t+T) = \mu y(t)$.
	\end{lemma}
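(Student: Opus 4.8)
The plan is to exploit the same correspondence-of-solutions philosophy used throughout the paper, now applied to a general eigenvalue $\mu \neq 0$ of the monodromy operator $Y_1(T)$ of the DDE \eqref{eq: dde appendix}. First I would recall the abstract setup for linear time-periodic DDE: the solution operator $Y_1(t)$ acts on the history space $C([-T,0],\mathbb{R}^N)$, and $Y_1(T)$ is the period map (which we have just shown is compact). The key definition to unravel is what it means for $\phi$ to be an eigenfunction: $Y_1(T)\phi = \mu\phi$ means precisely that, extending $\phi$ to a solution $y(t)$ of \eqref{eq: dde appendix} on $[0,\infty)$ with initial history $y|_{[-T,0]} = \phi$, the shifted history at time $T$ equals $\mu$ times the original history, i.e. $y(T+\theta) = \mu\, y(\theta)$ for all $\theta \in [-T,0]$.

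The forward-looking core of the argument is then to bootstrap this segment-wise identity into the global relation $y(t+T) = \mu\, y(t)$ for all $t \geq -T$. In one direction: given an eigenfunction $\phi$ with $Y_1(T)\phi = \mu\phi$, define $y$ as the solution with history $\phi$; the relation $y(T+\theta) = \mu y(\theta)$ holds on $[-T,0]$ by definition of the eigenvalue equation. To push it forward, I would use uniqueness of solutions: the function $t \mapsto y(t+T)$ solves \eqref{eq: dde appendix} (here the $T$-periodicity of $A(t)$ and the fact that the delay equals $T$ are essential — shifting by $T$ leaves the equation invariant), and so does $t \mapsto \mu\, y(t)$ by linearity; since these two solutions agree as histories on $[-T,0]$, they agree for all $t \geq 0$, hence $y(t+T) = \mu y(t)$ for all $t \geq -T$. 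Conversely, given a solution $y$ of \eqref{eq: dde appendix} on $[-T,\infty)$ with $y(t+T) = \mu y(t)$, its restriction $\phi := y|_{[-T,0]}$ is nonzero (since $\mu \neq 0$ and $y \not\equiv 0$), lies in $C([-T,0],\mathbb{R}^N)$, and satisfies $Y_1(T)\phi = \mu\phi$ directly from the defining relation. These two assignments $\phi \mapsto y$ and $y \mapsto \phi$ are mutually inverse, establishing the claimed one-to-one correspondence.

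The main obstacle — really the only subtle point — is justifying that $t \mapsto y(t+T)$ again solves \eqref{eq: dde appendix}, which hinges on both $A(t+T) = A(t)$ and on the delay in the control term being exactly $T$ so that the shift is compatible with the retarded argument; once this invariance is in place, the rest is a routine application of uniqueness for linear retarded functional differential equations (see \cite[Chapter VII]{DIE95}). A minor bookkeeping point is the domain of validity: the relation $y(T+\theta) = \mu y(\theta)$ coming from $Y_1(T)\phi = \mu\phi$ is initially only on $\theta \in [-T,0]$, and one must check the extended solution is well-defined and continuous across $t = 0$, which follows from standard existence theory for \eqref{eq: dde appendix}. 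No new functional-analytic machinery beyond what is already invoked for the compactness lemma is needed; the proof is essentially a transcription of the eigenvalue equation into the language of periodic-up-to-scalar solutions, exactly parallel to Lemma \ref{lem: count ev ode} for ODE.
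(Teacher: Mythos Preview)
Your proposal is correct and follows essentially the same approach as the paper: both directions use that $Y_1(T)\phi = \mu\phi$ is equivalent to the history relation $y_T = \mu y_0$, and then invoke uniqueness of solutions of the linear DDE to propagate this to $y_{t+T} = \mu y_t$ for all $t$. Your version is more fully spelled out than the paper's (which is quite terse), but the underlying idea is identical. One minor quibble: you state that the delay being exactly $T$ is essential for the shift $t \mapsto y(t+T)$ to again solve \eqref{eq: dde appendix}, but in fact only the $T$-periodicity of $A(t)$ is needed for this; the equation $\dot y(t)=A(t)y(t)+K[y(t)-y(t-\tau)]$ is $T$-periodic in $t$ for any fixed delay $\tau$.
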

	\begin{proof}
		Let $\phi$ be an eigenfunction of $Y_1(T)$ associated to the eigenvalue $\mu$. Then $y_t : = Y_1(t) \phi$ is a solution of \eqref{eq: dde appendix} satisfying $y_T = \mu y_0$ and hence, by uniqueness of solutions $y_{t+T} = \mu y_t$.  
		Vice versa, suppose that $x(t)$ is a solution of \eqref{eq: dde appendix}, then $x_t = Y_1(t) x_0$. So, if $y(t+T) = y(t)$ for all $t \in \mathbb{R}$, then in particular $y_T = \mu y_0$ and hence $Y_1(T) x_0 = \mu y_0$. 
	\end{proof}
	
	\section*{Data Availability Statement}
	Data sharing is not applicable to this article as no new data were created or analyzed in this study.
	
	\section*{References}
	\bibliography{bib_dissertation}

\providecommand{\noopsort}[1]{}\providecommand{\singleletter}[1]{#1}%
\begin{thebibliography}{33}%
\makeatletter
\providecommand \@ifxundefined [1]{%
 \@ifx{#1\undefined}
}%
\providecommand \@ifnum [1]{%
 \ifnum #1\expandafter \@firstoftwo
 \else \expandafter \@secondoftwo
 \fi
}%
\providecommand \@ifx [1]{%
 \ifx #1\expandafter \@firstoftwo
 \else \expandafter \@secondoftwo
 \fi
}%
\providecommand \natexlab [1]{#1}%
\providecommand \enquote  [1]{``#1''}%
\providecommand \bibnamefont  [1]{#1}%
\providecommand \bibfnamefont [1]{#1}%
\providecommand \citenamefont [1]{#1}%
\providecommand \href@noop [0]{\@secondoftwo}%
\providecommand \href [0]{\begingroup \@sanitize@url \@href}%
\providecommand \@href[1]{\@@startlink{#1}\@@href}%
\providecommand \@@href[1]{\endgroup#1\@@endlink}%
\providecommand \@sanitize@url [0]{\catcode `\\12\catcode `\$12\catcode
  `\&12\catcode `\#12\catcode `\^12\catcode `\_12\catcode `\%12\relax}%
\providecommand \@@startlink[1]{}%
\providecommand \@@endlink[0]{}%
\providecommand \url  [0]{\begingroup\@sanitize@url \@url }%
\providecommand \@url [1]{\endgroup\@href {#1}{\urlprefix }}%
\providecommand \urlprefix  [0]{URL }%
\providecommand \Eprint [0]{\href }%
\providecommand \doibase [0]{http://dx.doi.org/}%
\providecommand \selectlanguage [0]{\@gobble}%
\providecommand \bibinfo  [0]{\@secondoftwo}%
\providecommand \bibfield  [0]{\@secondoftwo}%
\providecommand \translation [1]{[#1]}%
\providecommand \BibitemOpen [0]{}%
\providecommand \bibitemStop [0]{}%
\providecommand \bibitemNoStop [0]{.\EOS\space}%
\providecommand \EOS [0]{\spacefactor3000\relax}%
\providecommand \BibitemShut  [1]{\csname bibitem#1\endcsname}%
\let\auto@bib@innerbib\@empty
\bibitem [{\citenamefont {Diekmann}\ \emph {et~al.}(1995)\citenamefont
  {Diekmann}, \citenamefont {Van~Gils}, \citenamefont {Lunel},\ and\
  \citenamefont {Walther}}]{DIE95}%
  \BibitemOpen
  \bibfield  {author} {\bibinfo {author} {\bibnamefont {Diekmann},
  \bibfnamefont {O.}}, \bibinfo {author} {\bibnamefont {Van~Gils},
  \bibfnamefont {S.~A.}}, \bibinfo {author} {\bibnamefont {Lunel},
  \bibfnamefont {S.~V.}}, \ and\ \bibinfo {author} {\bibnamefont {Walther},
  \bibfnamefont {H.-O.}},\ }\href {\doibase 10.1007/978-1-4612-4206-2} {\emph
  {\bibinfo {title} {Delay equations: functional-, complex-, and nonlinear
  analysis}}},\ \bibinfo {series} {Applied Mathematical Sciences}, Vol.\
  \bibinfo {volume} {110}\ (\bibinfo  {publisher} {Springer-Verlag New York},\
  \bibinfo {year} {1995})\BibitemShut {NoStop}%
\bibitem [{\citenamefont {Droenner}\ \emph {et~al.}(2019)\citenamefont
  {Droenner}, \citenamefont {Naumann}, \citenamefont {Sch{\"o}ll},
  \citenamefont {Knorr},\ and\ \citenamefont {Carmele}}]{DRO19}%
  \BibitemOpen
  \bibfield  {author} {\bibinfo {author} {\bibnamefont {Droenner},
  \bibfnamefont {L.}}, \bibinfo {author} {\bibnamefont {Naumann}, \bibfnamefont
  {N.~L.}}, \bibinfo {author} {\bibnamefont {Sch{\"o}ll}, \bibfnamefont {E.}},
  \bibinfo {author} {\bibnamefont {Knorr}, \bibfnamefont {A.}}, \ and\ \bibinfo
  {author} {\bibnamefont {Carmele}, \bibfnamefont {A.}},\ }\bibfield  {title}
  {\enquote {\bibinfo {title} {Quantum pyragas control: Selective control of
  individual photon probabilities},}\ }\href@noop {} {\bibfield  {journal}
  {\bibinfo  {journal} {Physical Review A}\ }\textbf {\bibinfo {volume} {99}},\
  \bibinfo {pages} {023840} (\bibinfo {year} {2019})}\BibitemShut {NoStop}%
\bibitem [{\citenamefont {Fiedler}\ \emph {et~al.}(2007)\citenamefont
  {Fiedler}, \citenamefont {Flunkert}, \citenamefont {Georgi}, \citenamefont
  {H{\"o}vel},\ and\ \citenamefont {Sch{\"o}ll}}]{FIE07}%
  \BibitemOpen
  \bibfield  {author} {\bibinfo {author} {\bibnamefont {Fiedler}, \bibfnamefont
  {B.}}, \bibinfo {author} {\bibnamefont {Flunkert}, \bibfnamefont {V.}},
  \bibinfo {author} {\bibnamefont {Georgi}, \bibfnamefont {M.}}, \bibinfo
  {author} {\bibnamefont {H{\"o}vel}, \bibfnamefont {P.}}, \ and\ \bibinfo
  {author} {\bibnamefont {Sch{\"o}ll}, \bibfnamefont {E.}},\ }\bibfield
  {title} {\enquote {\bibinfo {title} {Re\-fu\-ting the odd-number limitation
  of time-delayed feedback control},}\ }\href@noop {} {\bibfield  {journal}
  {\bibinfo  {journal} {Physical Review Letters}\ }\textbf {\bibinfo {volume}
  {98}},\ \bibinfo {pages} {114101} (\bibinfo {year} {2007})}\BibitemShut
  {NoStop}%
\bibitem [{\citenamefont {Fiedler}\ \emph {et~al.}(2008)\citenamefont
  {Fiedler}, \citenamefont {Flunkert}, \citenamefont {Georgi}, \citenamefont
  {H{\"o}vel},\ and\ \citenamefont {Sch{\"o}ll}}]{FIE08}%
  \BibitemOpen
  \bibfield  {author} {\bibinfo {author} {\bibnamefont {Fiedler}, \bibfnamefont
  {B.}}, \bibinfo {author} {\bibnamefont {Flunkert}, \bibfnamefont {V.}},
  \bibinfo {author} {\bibnamefont {Georgi}, \bibfnamefont {M.}}, \bibinfo
  {author} {\bibnamefont {H{\"o}vel}, \bibfnamefont {P.}}, \ and\ \bibinfo
  {author} {\bibnamefont {Sch{\"o}ll}, \bibfnamefont {E.}},\ }\enquote
  {\bibinfo {title} {Beyond the odd number limitation of time-delayed feedback
  control},}\ in\ \href@noop {} {\emph {\bibinfo {booktitle} {Handbook of chaos
  control}}}\ (\bibinfo  {publisher} {John Wiley \& Sons},\ \bibinfo {year}
  {2008})\ pp.\ \bibinfo {pages} {73--84}\BibitemShut {NoStop}%
\bibitem [{\citenamefont {Fiedler}\ \emph {et~al.}(2020)\citenamefont
  {Fiedler}, \citenamefont {Nieto}, \citenamefont {Rand}, \citenamefont {Sah},
  \citenamefont {Schneider},\ and\ \citenamefont {de~Wolff}}]{FIE20}%
  \BibitemOpen
  \bibfield  {author} {\bibinfo {author} {\bibnamefont {Fiedler}, \bibfnamefont
  {B.}}, \bibinfo {author} {\bibnamefont {Nieto}, \bibfnamefont {A.~L.}},
  \bibinfo {author} {\bibnamefont {Rand}, \bibfnamefont {R.~H.}}, \bibinfo
  {author} {\bibnamefont {Sah}, \bibfnamefont {S.~M.}}, \bibinfo {author}
  {\bibnamefont {Schneider}, \bibfnamefont {I.}}, \ and\ \bibinfo {author}
  {\bibnamefont {de~Wolff}, \bibfnamefont {B.}},\ }\bibfield  {title} {\enquote
  {\bibinfo {title} {Coexistence of infinitely many large, stable, rapidly
  oscillating periodic solutions in time-delayed duffing oscillators},}\
  }\href@noop {} {\bibfield  {journal} {\bibinfo  {journal} {Journal of
  Differential Equations}\ }\textbf {\bibinfo {volume} {268}},\ \bibinfo
  {pages} {5969--5995} (\bibinfo {year} {2020})}\BibitemShut {NoStop}%
\bibitem [{\citenamefont {Fiedler}\ and\ \citenamefont {Oliva}(2015)}]{FIE15}%
  \BibitemOpen
  \bibfield  {author} {\bibinfo {author} {\bibnamefont {Fiedler}, \bibfnamefont
  {B.}}\ and\ \bibinfo {author} {\bibnamefont {Oliva}, \bibfnamefont {S.}},\
  }\bibfield  {title} {\enquote {\bibinfo {title} {Delayed feedback control of
  a delay equation at hopf bifurcation},}\ }in\ \href@noop {} {\emph {\bibinfo
  {booktitle} {Journal of Dynamics and Differential Equations}}}\ (\bibinfo
  {publisher} {Springer},\ \bibinfo {year} {2015})\ pp.\ \bibinfo {pages}
  {1--35}\BibitemShut {NoStop}%
\bibitem [{\citenamefont {Fiedler}\ and\ \citenamefont
  {Schneider}(2017)}]{FIE17}%
  \BibitemOpen
  \bibfield  {author} {\bibinfo {author} {\bibnamefont {Fiedler}, \bibfnamefont
  {B.}}\ and\ \bibinfo {author} {\bibnamefont {Schneider}, \bibfnamefont
  {I.}},\ }\bibfield  {title} {\enquote {\bibinfo {title} {Stabilized rapid
  oscillations in a delay equation: Feedback control by a small resonant
  delay},}\ }\href@noop {} {\bibfield  {journal} {\bibinfo  {journal} {arXiv
  preprint arXiv:1708.08101}\ } (\bibinfo {year} {2017})}\BibitemShut {NoStop}%
\bibitem [{\citenamefont {Hale}(2009)}]{haleODE}%
  \BibitemOpen
  \bibfield  {author} {\bibinfo {author} {\bibnamefont {Hale}, \bibfnamefont
  {J.}},\ }\href@noop {} {\emph {\bibinfo {title} {Ordinary Differential
  Equations}}},\ Dover Books on Mathematics Series\ (\bibinfo  {publisher}
  {Dover Publications},\ \bibinfo {year} {2009})\BibitemShut {NoStop}%
\bibitem [{\citenamefont {Hale}\ and\ \citenamefont {Lunel}(1993)}]{HAL93}%
  \BibitemOpen
  \bibfield  {author} {\bibinfo {author} {\bibnamefont {Hale}, \bibfnamefont
  {J.}}\ and\ \bibinfo {author} {\bibnamefont {Lunel}, \bibfnamefont {S.~V.}},\
  }\href@noop {} {\emph {\bibinfo {title} {Introduction to Functional
  Differential Equations}}},\ \bibinfo {series} {Applied Mathematical
  Sciences}, Vol.~\bibinfo {volume} {99}\ (\bibinfo  {publisher} {Springer},\
  \bibinfo {year} {1993})\BibitemShut {NoStop}%
\bibitem [{\citenamefont {Hein}\ \emph {et~al.}(2015)\citenamefont {Hein},
  \citenamefont {Schulze}, \citenamefont {Carmele},\ and\ \citenamefont
  {Knorr}}]{HEI15}%
  \BibitemOpen
  \bibfield  {author} {\bibinfo {author} {\bibnamefont {Hein}, \bibfnamefont
  {S.}}, \bibinfo {author} {\bibnamefont {Schulze}, \bibfnamefont {F.}},
  \bibinfo {author} {\bibnamefont {Carmele}, \bibfnamefont {A.}}, \ and\
  \bibinfo {author} {\bibnamefont {Knorr}, \bibfnamefont {A.}},\ }\bibfield
  {title} {\enquote {\bibinfo {title} {Entanglement control in quantum networks
  by quantum-coherent time-delayed feedback},}\ }\href@noop {} {\bibfield
  {journal} {\bibinfo  {journal} {Physical Review A}\ }\textbf {\bibinfo
  {volume} {91}},\ \bibinfo {pages} {052321} (\bibinfo {year}
  {2015})}\BibitemShut {NoStop}%
\bibitem [{\citenamefont {Hooton}\ and\ \citenamefont {Amann}(2012)}]{HOO12}%
  \BibitemOpen
  \bibfield  {author} {\bibinfo {author} {\bibnamefont {Hooton}, \bibfnamefont
  {E.}}\ and\ \bibinfo {author} {\bibnamefont {Amann}, \bibfnamefont {A.}},\
  }\bibfield  {title} {\enquote {\bibinfo {title} {Analytical limitation for
  time-de\-layed feedback control in autonomous systems},}\ }\href@noop {}
  {\bibfield  {journal} {\bibinfo  {journal} {Physical Review Letters}\
  }\textbf {\bibinfo {volume} {109}},\ \bibinfo {pages} {154101} (\bibinfo
  {year} {2012})}\BibitemShut {NoStop}%
\bibitem [{\citenamefont {H{\"o}vel}\ and\ \citenamefont
  {Sch{\"o}ll}(2005)}]{HOE05}%
  \BibitemOpen
  \bibfield  {author} {\bibinfo {author} {\bibnamefont {H{\"o}vel},
  \bibfnamefont {P.}}\ and\ \bibinfo {author} {\bibnamefont {Sch{\"o}ll},
  \bibfnamefont {E.}},\ }\bibfield  {title} {\enquote {\bibinfo {title}
  {Control of unstable steady states by time-delayed feedback methods},}\
  }\href@noop {} {\bibfield  {journal} {\bibinfo  {journal} {Physical Review
  E}\ }\textbf {\bibinfo {volume} {72}},\ \bibinfo {pages} {\\ 046203}
  (\bibinfo {year} {2005})}\BibitemShut {NoStop}%
\bibitem [{\citenamefont {Just}\ \emph {et~al.}(2007)\citenamefont {Just},
  \citenamefont {Fiedler}, \citenamefont {Georgi}, \citenamefont {Flunkert},
  \citenamefont {H{\"o}vel},\ and\ \citenamefont {Sch{\"o}ll}}]{JUS07}%
  \BibitemOpen
  \bibfield  {author} {\bibinfo {author} {\bibnamefont {Just}, \bibfnamefont
  {W.}}, \bibinfo {author} {\bibnamefont {Fiedler}, \bibfnamefont {B.}},
  \bibinfo {author} {\bibnamefont {Georgi}, \bibfnamefont {M.}}, \bibinfo
  {author} {\bibnamefont {Flunkert}, \bibfnamefont {V.}}, \bibinfo {author}
  {\bibnamefont {H{\"o}vel}, \bibfnamefont {P.}}, \ and\ \bibinfo {author}
  {\bibnamefont {Sch{\"o}ll}, \bibfnamefont {E.}},\ }\bibfield  {title}
  {\enquote {\bibinfo {title} {Beyond the odd number limitation: a bifurcation
  analysis of time-delayed feedback control},}\ }\href@noop {} {\bibfield
  {journal} {\bibinfo  {journal} {Physical Review E}\ }\textbf {\bibinfo
  {volume} {76}},\ \bibinfo {pages} {026210} (\bibinfo {year}
  {2007})}\BibitemShut {NoStop}%
\bibitem [{\citenamefont {Just}\ \emph {et~al.}(1999)\citenamefont {Just},
  \citenamefont {Reibold}, \citenamefont {Benner}, \citenamefont {Kacperski},
  \citenamefont {Fronczak},\ and\ \citenamefont {Ho{\l}yst}}]{JUS99}%
  \BibitemOpen
  \bibfield  {author} {\bibinfo {author} {\bibnamefont {Just}, \bibfnamefont
  {W.}}, \bibinfo {author} {\bibnamefont {Reibold}, \bibfnamefont {E.}},
  \bibinfo {author} {\bibnamefont {Benner}, \bibfnamefont {H.}}, \bibinfo
  {author} {\bibnamefont {Kacperski}, \bibfnamefont {K.}}, \bibinfo {author}
  {\bibnamefont {Fronczak}, \bibfnamefont {P.}}, \ and\ \bibinfo {author}
  {\bibnamefont {Ho{\l}yst}, \bibfnamefont {J.}},\ }\bibfield  {title}
  {\enquote {\bibinfo {title} {Limits of time-delayed feedback control},}\
  }\href@noop {} {\bibfield  {journal} {\bibinfo  {journal} {Physics Letters
  A}\ }\textbf {\bibinfo {volume} {254}},\ \bibinfo {pages} {158--164}
  (\bibinfo {year} {1999})}\BibitemShut {NoStop}%
\bibitem [{\citenamefont {Kaashoek}\ and\ \citenamefont
  {Lunel}(1992)}]{KaashoekVL92}%
  \BibitemOpen
  \bibfield  {author} {\bibinfo {author} {\bibnamefont {Kaashoek},
  \bibfnamefont {M.}}\ and\ \bibinfo {author} {\bibnamefont {Lunel},
  \bibfnamefont {S.~M.~V.}},\ }\bibfield  {title} {\enquote {\bibinfo {title}
  {Characteristic matrices and spectral properties of evolutionary systems},}\
  }\href@noop {} {\bibfield  {journal} {\bibinfo  {journal} {Transactions of
  the American Mathematical Society}\ }\textbf {\bibinfo {volume} {334}}
  (\bibinfo {year} {1992})}\BibitemShut {NoStop}%
\bibitem [{\citenamefont {Kaashoek}\ and\ \citenamefont
  {Lunel}(2021)}]{KaashoekVL20}%
  \BibitemOpen
  \bibfield  {author} {\bibinfo {author} {\bibnamefont {Kaashoek},
  \bibfnamefont {R.}}\ and\ \bibinfo {author} {\bibnamefont {Lunel},
  \bibfnamefont {S.~V.}},\ }\href@noop {} {\emph {\bibinfo {title}
  {Completeness theorems, characteristic matrices and applications to integral
  and differential operators}}}\ (\bibinfo  {publisher} {to appear},\ \bibinfo
  {year} {2021})\BibitemShut {NoStop}%
\bibitem [{\citenamefont {Kato}(1995)}]{Kato95}%
  \BibitemOpen
  \bibfield  {author} {\bibinfo {author} {\bibnamefont {Kato}, \bibfnamefont
  {T.}},\ }\href@noop {} {\emph {\bibinfo {title} {Pertubation Theory for
  Linear Operators}}}\ (\bibinfo  {publisher} {Springer},\ \bibinfo {year}
  {1995})\BibitemShut {NoStop}%
\bibitem [{\citenamefont {Kuske}, \citenamefont {Lee},\ and\ \citenamefont
  {Rottsch{\"a}fer}(2018)}]{KUS18}%
  \BibitemOpen
  \bibfield  {author} {\bibinfo {author} {\bibnamefont {Kuske}, \bibfnamefont
  {R.}}, \bibinfo {author} {\bibnamefont {Lee}, \bibfnamefont {C.}}, \ and\
  \bibinfo {author} {\bibnamefont {Rottsch{\"a}fer}, \bibfnamefont {V.}},\
  }\bibfield  {title} {\enquote {\bibinfo {title} {Patterns and coherence
  resonance in the stochastic swift--hohenberg equation with pyragas control:
  The turing bifurcation case},}\ }\href@noop {} {\bibfield  {journal}
  {\bibinfo  {journal} {Physica D: Nonlinear Phenomena}\ }\textbf {\bibinfo
  {volume} {365}},\ \bibinfo {pages} {57--71} (\bibinfo {year}
  {2018})}\BibitemShut {NoStop}%
\bibitem [{\citenamefont {Lekebusch}, \citenamefont {F{\"o}rster},\ and\
  \citenamefont {Schneider}(1995)}]{LEK95}%
  \BibitemOpen
  \bibfield  {author} {\bibinfo {author} {\bibnamefont {Lekebusch},
  \bibfnamefont {A.}}, \bibinfo {author} {\bibnamefont {F{\"o}rster},
  \bibfnamefont {A.}}, \ and\ \bibinfo {author} {\bibnamefont {Schneider},
  \bibfnamefont {F.}},\ }\bibfield  {title} {\enquote {\bibinfo {title} {Chaos
  control in an enzymatic reaction},}\ }\href@noop {} {\bibfield  {journal}
  {\bibinfo  {journal} {The Journal of Physical Chemistry}\ }\textbf {\bibinfo
  {volume} {99}},\ \bibinfo {pages} {681--686} (\bibinfo {year}
  {1995})}\BibitemShut {NoStop}%
\bibitem [{\citenamefont {Nakajima}(1997)}]{NAK97}%
  \BibitemOpen
  \bibfield  {author} {\bibinfo {author} {\bibnamefont {Nakajima},
  \bibfnamefont {H.}},\ }\bibfield  {title} {\enquote {\bibinfo {title} {On
  analytical properties of delayed feedback control of chaos},}\ }\href@noop {}
  {\bibfield  {journal} {\bibinfo  {journal} {Physics Letters A}\ }\textbf
  {\bibinfo {volume} {232}},\ \bibinfo {pages} {207--210} (\bibinfo {year}
  {1997})}\BibitemShut {NoStop}%
\bibitem [{\citenamefont {Omar}(2012)}]{OMA12}%
  \BibitemOpen
  \bibfield  {author} {\bibinfo {author} {\bibnamefont {Omar}, \bibfnamefont
  {H.}},\ }\href@noop {} {\enquote {\bibinfo {title} {Control optimization
  method for helicopters carrying suspended loads},}\ } (\bibinfo {year}
  {2012}),\ \bibinfo {note} {united States Patent 8,190,307}\BibitemShut
  {NoStop}%
\bibitem [{\citenamefont {Pyragas}(1992)}]{PYR92}%
  \BibitemOpen
  \bibfield  {author} {\bibinfo {author} {\bibnamefont {Pyragas}, \bibfnamefont
  {K.}},\ }\bibfield  {title} {\enquote {\bibinfo {title} {Continuous control
  of chaos by self-controlling feedback},}\ }\href@noop {} {\bibfield
  {journal} {\bibinfo  {journal} {Physics Letters A}\ }\textbf {\bibinfo
  {volume} {170}},\ \bibinfo {pages} {421--428} (\bibinfo {year}
  {1992})}\BibitemShut {NoStop}%
\bibitem [{\citenamefont {Rappel}, \citenamefont {Fenton},\ and\ \citenamefont
  {Karma}(1999)}]{RAP99}%
  \BibitemOpen
  \bibfield  {author} {\bibinfo {author} {\bibnamefont {Rappel}, \bibfnamefont
  {W.-J.}}, \bibinfo {author} {\bibnamefont {Fenton}, \bibfnamefont {F.}}, \
  and\ \bibinfo {author} {\bibnamefont {Karma}, \bibfnamefont {A.}},\
  }\bibfield  {title} {\enquote {\bibinfo {title} {Spatiotemporal control of
  wave instabilities in cardiac tissue},}\ }\href@noop {} {\bibfield  {journal}
  {\bibinfo  {journal} {Physical Review Letters}\ }\textbf {\bibinfo {volume}
  {83}},\ \bibinfo {pages} {456} (\bibinfo {year} {1999})}\BibitemShut
  {NoStop}%
\bibitem [{\citenamefont {Schikora}\ \emph {et~al.}(2006)\citenamefont
  {Schikora}, \citenamefont {H{\"o}vel}, \citenamefont {W{\"u}nsche},
  \citenamefont {Sch{\"o}ll},\ and\ \citenamefont {\\~Henneberger}}]{SCH06}%
  \BibitemOpen
  \bibfield  {author} {\bibinfo {author} {\bibnamefont {Schikora},
  \bibfnamefont {S.}}, \bibinfo {author} {\bibnamefont {H{\"o}vel},
  \bibfnamefont {P.}}, \bibinfo {author} {\bibnamefont {W{\"u}nsche},
  \bibfnamefont {H.-J.}}, \bibinfo {author} {\bibnamefont {Sch{\"o}ll},
  \bibfnamefont {E.}}, \ and\ \bibinfo {author} {\bibnamefont {\\~Henneberger},
  \bibfnamefont {F.}},\ }\bibfield  {title} {\enquote {\bibinfo {title}
  {All-optical noninvasive control of unstable steady states in a semiconductor
  laser},}\ }\href@noop {} {\bibfield  {journal} {\bibinfo  {journal} {Physical
  Review Letters}\ }\textbf {\bibinfo {volume} {97}},\ \bibinfo {pages}
  {213902} (\bibinfo {year} {2006})}\BibitemShut {NoStop}%
\bibitem [{\citenamefont {Schikora}, \citenamefont {W{\"u}nsche},\ and\
  \citenamefont {Henneberger}(2011)}]{SCH11a}%
  \BibitemOpen
  \bibfield  {author} {\bibinfo {author} {\bibnamefont {Schikora},
  \bibfnamefont {S.}}, \bibinfo {author} {\bibnamefont {W{\"u}nsche},
  \bibfnamefont {H.-J.}}, \ and\ \bibinfo {author} {\bibnamefont {Henneberger},
  \bibfnamefont {F.}},\ }\bibfield  {title} {\enquote {\bibinfo {title}
  {Odd-number theorem: Optical feedback control at a subcritical hopf
  bifurcation in a semiconductor laser},}\ }\href@noop {} {\bibfield  {journal}
  {\bibinfo  {journal} {Physical Review E}\ }\textbf {\bibinfo {volume} {83}},\
  \bibinfo {pages} {026203} (\bibinfo {year} {2011})}\BibitemShut {NoStop}%
\bibitem [{\citenamefont {Schneider}(2013)}]{SCH13}%
  \BibitemOpen
  \bibfield  {author} {\bibinfo {author} {\bibnamefont {Schneider},
  \bibfnamefont {I.}},\ }\bibfield  {title} {\enquote {\bibinfo {title}
  {Delayed feedback control of three diffusively coupled stuart--landau
  oscillators: a case study in equivariant hopf bifurcation},}\ }\href@noop {}
  {\bibfield  {journal} {\bibinfo  {journal} {Philosophical Transactions of the
  Royal Society A: Mathematical, Physical and Engineering Sciences}\ }\textbf
  {\bibinfo {volume} {371}},\ \bibinfo {pages} {\\ 20120472} (\bibinfo {year}
  {2013})}\BibitemShut {NoStop}%
\bibitem [{\citenamefont {Schneider}\ and\ \citenamefont
  {Bosewitz}(2016)}]{SCH16}%
  \BibitemOpen
  \bibfield  {author} {\bibinfo {author} {\bibnamefont {Schneider},
  \bibfnamefont {I.}}\ and\ \bibinfo {author} {\bibnamefont {Bosewitz},
  \bibfnamefont {M.}},\ }\bibfield  {title} {\enquote {\bibinfo {title}
  {Eliminating restrictions of time-delayed feedback control using
  equivariance},}\ }\href@noop {} {\bibfield  {journal} {\bibinfo  {journal}
  {Discrete and Continuous Dynamical Systems - Series A}\ }\textbf {\bibinfo
  {volume} {36}},\ \bibinfo {pages} {451--467} (\bibinfo {year}
  {2016})}\BibitemShut {NoStop}%
\bibitem [{\citenamefont {Sieber}\ and\ \citenamefont
  {Szalai}(2011)}]{Sieber11}%
  \BibitemOpen
  \bibfield  {author} {\bibinfo {author} {\bibnamefont {Sieber}, \bibfnamefont
  {J.}}\ and\ \bibinfo {author} {\bibnamefont {Szalai}, \bibfnamefont {R.}},\
  }\bibfield  {title} {\enquote {\bibinfo {title} {Characteristic matrices for
  linear periodic delay differential equations},}\ }\href {\doibase
  10.1137/100796455} {\bibfield  {journal} {\bibinfo  {journal} {SIAM Journal
  on Applied Dynamical Systems}\ }\textbf {\bibinfo {volume} {10}},\ \bibinfo
  {pages} {129--147} (\bibinfo {year} {2011})}\BibitemShut {NoStop}%
\bibitem [{\citenamefont {Steingrube}\ \emph {et~al.}(2010)\citenamefont
  {Steingrube}, \citenamefont {Timme}, \citenamefont {W{\"o}rg{\"o}tter},\ and\
  \citenamefont {Manoonpong}}]{STE10}%
  \BibitemOpen
  \bibfield  {author} {\bibinfo {author} {\bibnamefont {Steingrube},
  \bibfnamefont {S.}}, \bibinfo {author} {\bibnamefont {Timme}, \bibfnamefont
  {M.}}, \bibinfo {author} {\bibnamefont {W{\"o}rg{\"o}tter}, \bibfnamefont
  {F.}}, \ and\ \bibinfo {author} {\bibnamefont {Manoonpong}, \bibfnamefont
  {P.}},\ }\bibfield  {title} {\enquote {\bibinfo {title} {Self-organized
  adaptation of a simple neural circuit enables complex robot behaviour},}\
  }\href {\doibase 10.1038/nphys1508} {\bibfield  {journal} {\bibinfo
  {journal} {Nat. Phys.}\ }\textbf {\bibinfo {volume} {6}},\ \bibinfo {pages}
  {224--230} (\bibinfo {year} {2010})}\BibitemShut {NoStop}%
\bibitem [{\citenamefont {Strehober}, \citenamefont {Sch{\"o}ll},\ and\
  \citenamefont {Klapp}(2013)}]{STR13}%
  \BibitemOpen
  \bibfield  {author} {\bibinfo {author} {\bibnamefont {Strehober},
  \bibfnamefont {D.}}, \bibinfo {author} {\bibnamefont {Sch{\"o}ll},
  \bibfnamefont {E.}}, \ and\ \bibinfo {author} {\bibnamefont {Klapp},
  \bibfnamefont {S.}},\ }\bibfield  {title} {\enquote {\bibinfo {title}
  {Feedback control of flow alignment in sheared liquid crystals},}\
  }\href@noop {} {\bibfield  {journal} {\bibinfo  {journal} {Physical Review
  E}\ }\textbf {\bibinfo {volume} {88}},\ \bibinfo {pages} {062509} (\bibinfo
  {year} {2013})}\BibitemShut {NoStop}%
\bibitem [{\citenamefont {de~Wolff}(2021)}]{deWolff21}%
  \BibitemOpen
  \bibfield  {author} {\bibinfo {author} {\bibnamefont {de~Wolff},
  \bibfnamefont {B.}},\ }\href@noop {} {\emph {\bibinfo {title} {Delayed
  feedback stabilization with and without symmetries}}}\ (\bibinfo  {publisher}
  {to appear},\ \bibinfo {year} {2021})\BibitemShut {NoStop}%
\bibitem [{\citenamefont {Yamasue}\ \emph {et~al.}(2009)\citenamefont
  {Yamasue}, \citenamefont {Kobayashi}, \citenamefont {Yamada}, \citenamefont
  {Matsushige},\ and\ \citenamefont {Hikihara}}]{YAM09}%
  \BibitemOpen
  \bibfield  {author} {\bibinfo {author} {\bibnamefont {Yamasue}, \bibfnamefont
  {K.}}, \bibinfo {author} {\bibnamefont {Kobayashi}, \bibfnamefont {K.}},
  \bibinfo {author} {\bibnamefont {Yamada}, \bibfnamefont {H.}}, \bibinfo
  {author} {\bibnamefont {Matsushige}, \bibfnamefont {K.}}, \ and\ \bibinfo
  {author} {\bibnamefont {Hikihara}, \bibfnamefont {T.}},\ }\bibfield  {title}
  {\enquote {\bibinfo {title} {Controlling chaos in dynamic-mode atomic force
  microscope},}\ }\href@noop {} {\bibfield  {journal} {\bibinfo  {journal}
  {Physics Letters A}\ }\textbf {\bibinfo {volume} {373}},\ \bibinfo {pages}
  {3140--3144} (\bibinfo {year} {2009})}\BibitemShut {NoStop}%
\bibitem [{\citenamefont {Yanchuk}\ \emph {et~al.}(2006)\citenamefont
  {Yanchuk}, \citenamefont {Wolfrum}, \citenamefont {H{\"o}vel},\ and\
  \citenamefont {Sch{\"o}ll}}]{YAN06}%
  \BibitemOpen
  \bibfield  {author} {\bibinfo {author} {\bibnamefont {Yanchuk}, \bibfnamefont
  {S.}}, \bibinfo {author} {\bibnamefont {Wolfrum}, \bibfnamefont {M.}},
  \bibinfo {author} {\bibnamefont {H{\"o}vel}, \bibfnamefont {P.}}, \ and\
  \bibinfo {author} {\bibnamefont {Sch{\"o}ll}, \bibfnamefont {E.}},\
  }\bibfield  {title} {\enquote {\bibinfo {title} {Control of unstable steady
  states by long delay feedback},}\ }\href@noop {} {\bibfield  {journal}
  {\bibinfo  {journal} {Physical Review E}\ }\textbf {\bibinfo {volume} {74}},\
  \bibinfo {pages} {026201} (\bibinfo {year} {2006})}\BibitemShut {NoStop}%
\end{thebibliography}%
	
\end{document}